\newtheorem{theorem}{Theorem}[section]
\newtheorem*{problem}{Problem}
\newtheorem{lemma}[theorem]{Lemma}
\newtheorem{proposition}[theorem]{Proposition}
\theoremstyle{definition}
\newtheorem{definition}[theorem]{Definition}
\newtheorem{definitions and remarks}[theorem]{Definitions and Remarks}
\theoremstyle{remark}
\newtheorem{remark}[theorem]{Remark}
\numberwithin{equation}{section}
\begin{document}
\title[Generelized Flow-Box]{Generalized Flow-Box property for singular foliations}

\author[A.~Belotto]{Andr\'e Belotto da Silva}
\author[D.~Panazzolo]{Daniel Panazzolo}

\thanks{Universit\'e Paul Sabatier, Institut de Math\'ematiques de Toulouse, UMR CNRS 5129, 118 route de Narbonne, F-31062 Toulouse Cedex 9, France ({\tt andre.belotto\_da\_silva@math.univ-toulouse.fr}).}
\thanks{Université de Haute Alsace, {IRIMAS, Département de Mathématiques}, 4 Rue des Frères Lumière - 68093 Mulhouse, France and Université de Strasbourg, France ({\tt daniel.panazzolo@uha.fr})}

\dedicatory{Dedicado \`{a} Felipe Cano, em comemora\c{c}\~{a}o dos seus 60 anos.} 
\subjclass[2010]{Primary 34C99, 34M35, 37F75; Secondary 14B05, 14E15, 34C08}

\keywords{Flow-Box theorem, vector-field, foliations, resolution of singularities}

\begin{abstract}
We introduce a notion of generalized Flow-Box property valid for general singular distributions and sub-varieties (based on a dynamical interpretation). Just as in the usual Flow-Box Theorem, we characterize geometrical and algebraic conditions of (quasi) transversality in order for an analytic sub-variety $X$ (not necessarily regular) to be a section of a line foliation. We also discuss the case of more general foliations.

This study is originally motivated by a question of Jean-Fran\c{c}ois Mattei {in \cite[Theorem 3.2.1]{Mattei}} about the existence of local slices for a (non-compact) Lie group action.

\end{abstract}

\maketitle
\setcounter{tocdepth}{1}

\section{Introduction}\label{sec:intro}
\noindent 
The Flow-Box Theorem (also called Straightening Theorem) stands as an important classical tool for the study of vector-fields. The Theorem states that the dynamic near a non-singular point is as simple as possible, that is, it is conjugated to a translation (see e.g. \cite[Theorem 1.14]{IY}). The Frobenius Theorem can be seen as a generalization for general foliations. In this work, we introduce a notion of generalized Flow-Box based on a dynamical interpretation and which is valid for singular foliations. We show, furthermore, that under ``quasi-transversality" conditions the generalized Flow-Box property holds. We were originally motivated by the study of local slices for (non-compact) Lie group actions and a question of {J.-F}. Mattei which we briefly discuss on subsection \ref{ssec:Or} below.

Let us denote by $M$ a real or complex analytic manifold and by $X$ an analytic (not necessarily regular) sub-variety of $M$. For simplicity, we specialize our presentation for vector-fields before considering general involutive singular distributions in subsection \ref{ssec:InM}.
\subsection{Vector-fields} \label{ssec:InV}
Let $\partial$ be an analytic vector-field over $M$ and denote by $Sing(\partial)$ the set of singular points of $\partial$. We say that the triple $(M,\partial,X)$ is \emph{transverse} (or, more commonly, that $X$ is a \emph{transverse section} of $\partial$) if the vector-field $\partial$ is transverse to $X$, that is, at every point $p\in X$ there exists an analytic germ {of function} $f$ such that $X \subset (f=0)$ and $\partial(f){(p) \neq 0}$. We consider the following version of the Flow-Box Theorem:

\begin{theorem}[Flow-Box Theorem]
If a triple $(M,\partial,X)$ is transverse then, at each point $p\in X$, there exists a neighbourhood $U$ of $p$ and a coordinate system $(x,\pmb{y})=(x,y_2,\ldots, y_m)$ over $U$ centered at $p$ such that $\partial= \partial_x$ and $X\cap U \subset \{x=0\}$.
\end{theorem}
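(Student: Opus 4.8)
The plan is to reduce the statement to the classical straightening of a vector field that is transverse to a smooth hypersurface, taking for that hypersurface a level set of the function supplied by the transversality hypothesis. Fix $p\in X$. By transversality there is an analytic germ of function $f$ at $p$ with $X\subset (f=0)$ (as germs at $p$) and $\partial(f)(p)\neq 0$. Since $\partial(f)(p)=df_p(\partial(p))$, this forces both $df_p\neq 0$ and $\partial(p)\neq 0$; hence, after passing to a suitable neighbourhood of $p$, the set $\Sigma:=(f=0)$ is a smooth analytic hypersurface through $p$, it contains $X$ locally, and $\partial$ is transverse to $\Sigma$ at $p$, i.e. $\partial(p)\notin T_p\Sigma$.

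Next I would build the coordinates out of the flow of $\partial$. Choose analytic coordinates $\pmb{y}=(y_2,\dots,y_m)$ on $\Sigma$ centred at $p$, say $\sigma(\pmb{y})\in\Sigma$ is the point with coordinates $\pmb{y}$, and for $q$ near $p$ let $\varphi_x(q)$ denote the local analytic flow of $\partial$ at time $x$; this exists and is analytic jointly in $(x,q)$ by the analytic dependence of solutions of ODEs on time and initial conditions (the holomorphic ODE theory in the complex-analytic case). Define
\[
\Phi(x,\pmb{y}) \;=\; \varphi_x\big(\sigma(\pmb{y})\big),
\]
an analytic map from a neighbourhood of the origin in $\IR^m$ (resp. $\IC^m$) to $M$ with $\Phi(0,\pmb{0})=p$.

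Finally I would check that $\Phi$ is a local analytic isomorphism and read off the conclusion. Its differential at $(0,\pmb{0})$ sends $\partial_x$ to $\partial(p)$ and each $\partial_{y_j}$ to a vector spanning $T_p\Sigma$; since $\partial(p)\notin T_p\Sigma$, $d\Phi_{(0,\pmb{0})}$ is invertible, so by the inverse function theorem (its holomorphic version in the complex case) $\Phi$ restricts to an analytic diffeomorphism from a box (resp. polydisc) around the origin onto a neighbourhood $U$ of $p$. Taking $(x,\pmb{y}):=\Phi^{-1}$ as coordinates on $U$, for each fixed $\pmb{y}$ the curve $x\mapsto\Phi(x,\pmb{y})$ is an integral curve of $\partial$, so $\Phi_*\partial_x=\partial$, i.e. $\partial=\partial_x$ in these coordinates; moreover $\{x=0\}\cap U=\Phi(\{x=0\})=\Sigma\cap U$, and since $X\cap U\subset (f=0)\cap U=\Sigma\cap U$ we obtain $X\cap U\subset\{x=0\}$. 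Note that the possible singularities of $X$ play no role: only the inclusion $X\subset\Sigma$ is used.

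I do not expect a genuine obstacle here. The two points that require a little care are the use of analyticity of the flow uniformly in the initial condition (standard, but category-dependent between the real-analytic and holomorphic settings), and the preliminary shrinking of the neighbourhood so that $(f=0)$ is simultaneously smooth, connected and still contains $X$; beyond this bookkeeping the argument is the classical Flow-Box proof with $\Sigma$ chosen adapted to $X$.
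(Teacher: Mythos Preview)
Your proof is correct and is the standard flow-based construction of straightening coordinates adapted to the hypersurface $\Sigma=(f=0)$. The paper does not actually prove this statement: it cites it as classical (referring to \cite{IY}) and only remarks (Remark~\ref{rk:FLBOX}(1)) that the transverse-section formulation follows from the usual Flow-Box Theorem via the Implicit Function Theorem --- i.e.\ first straighten $\partial=\partial_x$, then use $\partial_x f(p)\neq 0$ and IFT to write $(f=0)$ as a graph $x=h(\pmb{y})$ and shift $x\mapsto x-h(\pmb{y})$. Your argument merges these two steps into one by flowing directly off $\Sigma$, which is entirely equivalent and arguably cleaner.
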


\begin{remark}
\label{rk:FLBOX}
\begin{enumerate}
\item Sometimes the Flow-Box Theorem (see e.g. \cite[Theorem 1.14]{IY}) is stated independently of a transverse section $X$, but the above version is an equivalent statement (which follows from the Implicit Function Theorem).
\item Let $p\in X$ and denote by $\varphi_p(t)$ the orbit of $X$ such that $\varphi_p(0)=p$. If $(M,\partial,X)$ is transverse, then for every compact set $K \subset X$, there {exists $\delta>0$} such that $\varphi_p(t) \notin X$ for all $0<|t|<\delta$ and for all $p\in K$.
\item If $M$ is a complex manifold and $X$ is a codimension one smooth sub-variety, then {the reciprocal of statement in $(2)$ also holds}.
\end{enumerate}
\end{remark}

Based on the interpretation given in remark \ref{rk:FLBOX}(2), we define:
\begin{definition}[Generalized Flow-Box property]\label{def:gFB}
We say that the triple $(M,\partial,X)$ satisfies the \textit{generalized Flow-Box property} if for every compact set $K \subset X$, there exists a real number $\delta>0$ such that for every point $p \in K \setminus Sing(\partial)$ we have that $\varphi_p(t) \notin X$ for all $0<|t|<\delta$ (where $\varphi_p(t)$ is the orbit of $\partial$ such that $\varphi_p(0) = p$).
\end{definition}
\noindent
Note that the generalized Flow-Box property is defined for singular sub-varieties $X$ and vector-fields $\partial$. In order to illustrate the property, we present two examples:
\begin{enumerate}
\item Consider the triple $\left(\mathbb{R}^2,\partial = x\partial_y - y\partial_x, (y=0)\right)$ (see figure \ref{fig:Ex}). In this case $(M,\partial,X)$ satisfies the generalized Flow-Box property with, for example, $\delta= \pi/4$;
\item Consider the triple $\left(\mathbb{R}^2,\partial = \partial_x + x\partial_y, (y=0)   \right)$ (see figure \ref{fig:Ex}). In this case $(M,\partial, X)$ does not satisfy the generalized Flow-Box property;

\end{enumerate}
\begin{figure}
  \centering
\includegraphics[scale=0.7]{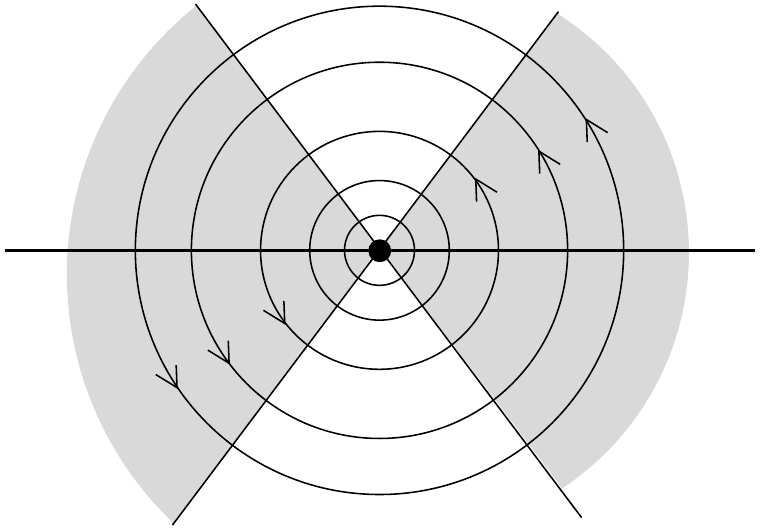}\includegraphics[scale=0.9]{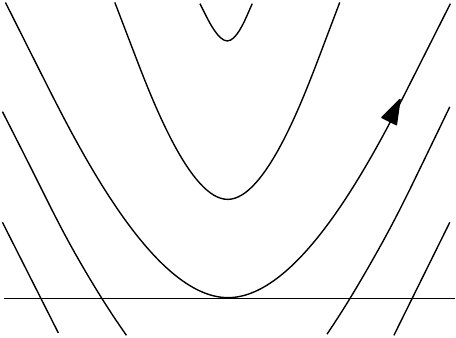}
\caption{\small{In the left, the phase space of Example 1. In the right the phase space of Example 2.}}
\label{fig:Ex}
\end{figure}
Now, just as in the usual Flow-Box Theorem, we characterize this condition in terms of a transversality condition between $\partial$ and $X$. We start by a geometrical condition:
\begin{definition}
A triple $(M,\partial,X)$ is \emph{geometrically quasi-transverse} if the triple $(M,\partial, X\setminus Sing(\partial))$ is transverse, that is, if at every point $p \in X\setminus Sing(\partial)$, there exists an analytic germ $f$ such that $X \subset (f=0)$ and $\partial(f)$ is a unit (at $p$).
\end{definition}

Under this condition, we prove the following result:
\begin{theorem}
Let $(M,\partial,X)$ be a geometrically quasi-transverse triple and assume that $X$ is non-singular. 
\begin{itemize}
\item[(a)] If $X$ has dimension or codimenstion one, then $(M,\partial,X)$ satisfies the generalized Flow-Box property. 
\item[(b)] If $dim \, M\geq 4$, there exists a geometrically quasi-transverse triple $(M,\partial,X)$ (where $X$ is non-singular) that does not satisfy the generalized Flow-Box property.
\end{itemize}
\label{th:R2simpl}
\end{theorem}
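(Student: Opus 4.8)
The plan is to prove (a) by reducing, via compactness and the usual Flow-Box Theorem, to a local lower bound on return times near a singular point of $\partial$ lying on $X$, and to prove (b) by an explicit example in $\IR^4$. For (a), the property in Definition~\ref{def:gFB} is vacuous at points of $K\cap\Sing(\partial)$, and at a point $p_0\in K\setminus\Sing(\partial)$ it holds with a $p_0$‑uniform $\delta$: geometric quasi‑transversality provides a germ $f$ with $X\subset(f=0)$ and $\partial(f)(p_0)\neq0$, so $\partial$ is transverse to $X$ on a whole neighbourhood of $p_0$ in $X$, and the Flow-Box Theorem gives coordinates with $\partial=\partial_x$, $X\subset\{x=0\}$; the orbit then leaves $\{x=0\}$ and cannot meet $X$ while it stays in the chart, uniformly on a compact neighbourhood of $p_0$ in $X$. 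Covering $K$ by finitely many such neighbourhoods, it remains to produce, for a fixed $q\in K\cap\Sing(\partial)$, a neighbourhood $V$ of $q$ and a $\delta>0$ with $\varphi_p(t)\notin X$ for all $p\in V\cap X\setminus\Sing(\partial)$ and $0<|t|<\delta$.

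Fix analytic coordinates at $q$. If $\codim X=1$, write $X=\{x=0\}$, $\partial=a\,\partial_x+\sum_i b_i\,\partial_{y_i}$ with $a=\partial(x)$; quasi‑transversality says exactly that each $b_i|_X$ vanishes on $(a|_X=0)=\Sing(\partial)\cap X$. From $x(\varphi_p(t))=\int_0^t a(\varphi_p(s))\,ds$ and $\tfrac{d}{dt}a(\varphi_p(t))=\partial(a)(\varphi_p(t))$, $\partial(a)=a\,\partial_x a+\sum_i b_i\,\partial_{y_i}a$, one sees that on $X$ the function $\partial(a)$ has order at least $\ord_q(a|_X)$ in the distance to $\Sing(\partial)\cap X$: the summand $a\,\partial_x a$ is divisible by $a$, and in $b_i\,\partial_{y_i}a$ the factor $b_i|_X$ vanishes there (to order $\ge1$) while $\partial_{y_i}(a|_X)$ vanishes to order $\ge\ord_q(a|_X)-1$. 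Comparing $\partial(a)$ with $a$ along orbits issued from $X$ (which stay $O(|t|)$‑close to $X$) should force $x(\varphi_p(t))\neq0$ for $0<|t|<\delta$ with $\delta$ independent of $p$. If $\dim X=1$, write $X=\{y_2=\dots=y_m=0\}$; quasi‑transversality says the common zero locus of the $b_i|_X$ equals $\Sing(\partial)\cap X$, and one uses the same circle of ideas, now to ensure that the transversal coordinates $y_i(\varphi_p(t))$ do not all return to $0$ simultaneously before a uniform time.

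The step I expect to be the main obstacle in (a) is precisely this uniformity near $q$: since $b_i|_X$ vanishes only on $\Sing(\partial)\cap X$, not on all of $\{a=0\}$, one has to combine a Łojasiewicz inequality on $X$ with control of the orbit's distance to $X$, and a naive Gronwall estimate yields only a $p$‑dependent $\delta$. The route I would take — natural given the resolution‑theoretic flavour of the paper — is to reduce the singularities of $\partial$ near $q$ so that $\partial$ becomes a monomial (log‑elementary) vector field and $X$ a coordinate subspace, where the bound is transparent, and then transfer it back, keeping track that the preimage of $X$ is the strict transform together with a component of the exceptional divisor, so that ``the orbit returns to $X$'' downstairs becomes ``the orbit returns to the strict transform or meets the exceptional divisor'' upstairs.

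For (b), in codimension $\ge2$ the mechanism above breaks: the transversal directions can reinforce one another, so that an orbit may leave $X$ near a singular point $q$ at an arbitrarily small angle and still be bent back onto $X$ in arbitrarily short time, all transversal coordinates returning to $0$ simultaneously. I would take $M=\IR^4$ with coordinates $(x,y,z,w)$, $X=\{z=w=0\}$ a non‑singular plane, and $\partial$ vanishing at $q=0$ and quasi‑transverse to $X$ along $X\setminus\{0\}$ (so $\Sing(\partial)\cap X=\{0\}$), arranged so that in the transversal coordinates the velocity $(\dot z,\dot w)$ on $X$ vanishes at $q$ to higher order than the transversal acceleration produced by the coupling terms, and tuned so that along a suitable curve in $X$ degenerating to $q$ both $z(\varphi_p(t))$ and $w(\varphi_p(t))$ vanish at a common time $t(p)\to0$. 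Grouping $\zeta=z+iw$ and making the $\zeta$‑component of $\partial$ holomorphic in $\zeta$ (with coefficients real‑analytic in $(x,y)$) forces the two transversal coordinates to return to $0$ simultaneously once a single reality condition is met, which reduces the construction to arranging that reality condition along a degenerating curve. The crux of (b) is to exhibit such a $\partial$ that is simultaneously analytic, geometrically quasi‑transverse, and in violation of the generalized Flow-Box property; the impossibility of this when $X$ has dimension or codimension one — that is, part (a) — is exactly what forces the example into the genuinely intermediate codimension, and hence into $\dim M\ge4$.
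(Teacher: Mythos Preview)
Your plan for (a) is in the right spirit but misses the specific mechanism the paper uses, and there is a real gap in the codimension-one case.

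The paper does \emph{not} resolve the singularities of $\partial$ to make it log-elementary. Instead, it applies Proposition~\ref{prop:HSimplificado}: a sequence of $\theta$-\emph{invariant} blowings-up (centers contained in $\Sing(\partial)\cup E$) driving the ideal $\sum_{i=0}^{\nu}\theta^i[\mathcal I]$ to the structure sheaf. The point of $\theta$-invariance is that the pulled-back vector field is analytic \emph{and} the orbit times are unchanged outside the exceptional locus; this is exactly what lets you transfer a $\delta$ from upstairs to downstairs. Your phrase ``reduce the singularities of $\partial$'' suggests a different (and harder) procedure that would not in general keep the time parametrisation intact.

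More seriously, your sketch assumes that after resolving, $\widetilde\partial$ is transverse to $\widetilde X$ everywhere, so the usual Flow-Box Theorem finishes. In the paper this is true for $\dim X=1$ and for $\codim X=1$ over $\IC$, but it \emph{fails} for $\codim X=1$ over $\IR$: the tangency locus $W=V(f,\widetilde\partial(f))$ may survive as a codimension-two subset of $\widetilde X$ contained in $\widetilde E$. The paper handles this with an orientation argument (Case~IV of the proof of Theorem~\ref{th:R1}): since $W$ has codimension $\ge 2$ in $\widetilde X$, the sign of $\langle\mathcal N,\widetilde\partial\rangle$ is constant on $\widetilde X\setminus W$, so orbits always cross $\widetilde X$ from one side to the other and cannot return. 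Your \L ojasiewicz/Gronwall outline does not capture this, and neither does ``$\partial$ becomes monomial and $X$ a coordinate subspace''.

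For (b), your intuition is sound but remains a wish list; the paper's actual example is rather different from what you propose. It takes $\partial=y^2\partial_w+x\partial_y-y\partial_x$ (a rotation in $(x,y)$ with a drift in $w$) and $X$ the \emph{graph} $\{z=f(x,y),\ w=g(x,y)\}$ for carefully chosen $f,g$ built from an auxiliary analytic function $h$ solving $(1+s^2)\cos\alpha\sin\alpha-\alpha=0$ (Lemma~\ref{lem:ce1}). After an invariant polar blowing-up, one exhibits orbits that hit $\widetilde X$ twice with time gap $h(r_0^2)\to 0$. Your scheme of fixing $X=\{z=w=0\}$ and making the $\zeta=z+iw$ component holomorphic is a plausible heuristic, but it is not the construction, and the delicate point --- verifying geometric quasi-transversality simultaneously with the double-intersection property --- is precisely where the paper spends its effort (Claims~1 and~2 in \S\ref{ssec:Example}).
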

\noindent
Condition $(a)$ of the above result is a consequence of Theorem \ref{th:R1} below and the part $(b)$ is proved in subsection \ref{ssec:Example}. Next, we introduce a stronger notion of quasi-transversality in order to get a sufficient condition:

\begin{definition}
A triple $(M,\partial,X)$ is \emph{analytically quasi-transverse} if it is geometrically quasi-transverse and $\partial^2(\mathcal{I}_X) \subset \partial(\mathcal{I}_X)+\mathcal{I}_X$, where $\mathcal{I}_X$ is the reduced ideal sheaf whose support is $X$.
\end{definition}

The above condition avoids tangent contacts between $X$ and $\partial$ which are ``hidden" in the singular locus. Indeed:

\begin{theorem}
If a triple $(M,\partial,X)$ is analytically quasi-transverse, then it satisfies the generalized Flow-Box property.
\label{th:R1simpl}
\end{theorem}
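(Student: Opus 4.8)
The plan is to reduce the assertion to a uniform local statement near an arbitrary point of $X$ and then globalise by compactness; the key point is that the hypothesis $\partial^2(\mathcal{I}_X)\subset\partial(\mathcal{I}_X)+\mathcal{I}_X$ is exactly what makes the relevant ``first-return'' problem linear. Fix $p\in X$ and a relatively compact neighbourhood $U$ on which the coherent reduced ideal sheaf $\mathcal{I}_X$ is generated by finitely many analytic functions $f_1,\dots,f_k$. First I would record the geometric meaning of quasi-transversality in algebraic form: if $q\in(X\cap U)\setminus\Sing(\partial)$, then the vector $\big(\partial(f_1)(q),\dots,\partial(f_k)(q)\big)$ is nonzero. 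Indeed, if it vanished then, writing any germ $g\in\mathcal{I}_{X,q}$ as $g=\sum_i h_if_i$ and using $f_i(q)=0$, one would get $\partial(g)(q)=0$ for every $g\in\mathcal{I}_{X,q}$, contradicting geometric quasi-transversality at $q$. Equivalently: the ideal $\mathcal{J}=\mathcal{I}_X+\partial(\mathcal{I}_X)$ — which is $\partial$-invariant precisely because of the analytic quasi-transversality hypothesis — cuts out a set contained in $X\cap\Sing(\partial)$.

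Next I would set up the governing system. For $q\in U$ and $t$ small let $u_i(t)=f_i(\varphi_q(t))$ and $v_i(t)=\partial(f_i)(\varphi_q(t))$. Then $u_i'=v_i$; and writing $\partial^2(f_i)=\sum_j a_{ij}f_j+\sum_j b_{ij}\,\partial(f_j)$ on $U$ (possible by the hypothesis) gives $v_i'=\sum_j a_{ij}(\varphi_q(t))\,u_j+\sum_j b_{ij}(\varphi_q(t))\,v_j$. Thus $w=(u,v)$ solves a linear system $w'=C(t)w$ whose coefficient matrix satisfies $\|C(t)\|\le L$ — with $L$ a bound for the $a_{ij},b_{ij}$ on $\overline U$, hence independent of $q$ — whenever $\varphi_q(t)\in U$. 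If $q\in X$ then $u(0)=0$, and if in addition $q\notin\Sing(\partial)$ then $v(0)\neq0$ by the first step. (The degenerate possibility $v(0)=0$ forces $w\equiv0$, i.e.\ the whole orbit lies in $X$; but then $q\in\Sing(\partial)$, which is excluded.)

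Now the uniform local statement. Let $\Phi(t)$ be the fundamental solution with $\Phi(0)=\mathrm{Id}$, so that $u(t)=\Phi_{12}(t)\,v(0)$, where $\Phi_{12}$ is the upper-right block. From the block form of $C$ one reads off $\Phi_{12}(0)=0$ and $\Phi_{12}'(0)=\mathrm{Id}$, so $\Phi_{12}(t)=t\,\Psi(t)$ with $\Psi$ continuous (holomorphic, in the complex case) and $\Psi(0)=\mathrm{Id}$. A Gronwall estimate depending only on $L$ — not on $q$, and in particular not on the size of $v(0)$ — yields $\delta_0>0$ with $\Psi(t)$ invertible for $|t|\le\delta_0$. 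Hence, for every $q\in(X\cap U)\setminus\Sing(\partial)$ and every $t$ with $0<|t|\le\delta_0$ such that $\varphi_q(s)\in U$ for all $s$ between $0$ and $t$, we get $u(t)=t\,\Psi(t)v(0)\neq0$; thus some $f_i(\varphi_q(t))\neq0$, i.e.\ $\varphi_q(t)\notin X$. To globalise, suppose the generalized Flow-Box property fails for some compact $K\subset X$: there are $p_n\in K\setminus\Sing(\partial)$ and $t_n\to0$, $t_n\neq0$, with $\varphi_{p_n}(t_n)\in X$. Passing to a subsequence with $p_n\to p\in K$ and taking $U,\delta_0$ as above for this $p$, joint continuity of the flow at $(p,0)$ gives that for $n$ large $p_n\in U$, $|t_n|\le\delta_0$, and $\varphi_{p_n}(s)\in U$ for all $s$ between $0$ and $t_n$ — contradicting the local statement.

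The main obstacle is exactly the uniformity at points of $\Sing(\partial)$ lying on $X$: the ``transversality constant'' $\|v(0)\|=\|(\partial(f_i)(q))_i\|$ decays to $0$ as $q\to\Sing(\partial)$, so the classical Flow-Box Theorem supplies no uniform $\delta$, and one must instead exploit the algebraic hypothesis. Its role is precisely to linearise the evolution of $(u,v)$ with coefficients bounded uniformly in $q$, so that $\delta_0$ comes from Gronwall alone while the conclusion $u(t)=t\,\Psi(t)v(0)\neq0$ persists no matter how small $v(0)\neq0$ is. A secondary technical point, handled in the globalisation step, is that an orbit arc may exit $U$; this is why the local statement is stated conditionally on the arc remaining in $U$ and the compactness argument only ever invokes very short arcs through points close to the limit point $p$.
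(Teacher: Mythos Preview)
Your argument is correct and takes a genuinely different route from the paper. The paper derives Theorem~\ref{th:R1simpl} from Theorem~\ref{th:R1}(i), whose proof uses resolution of singularities: by Proposition~\ref{prop:HSimplificado} one finds a sequence of $\theta$-invariant blowings-up of order one after which the transformed ideal satisfies $\widetilde{\partial}(\widetilde{\cI})+\widetilde{\cI}=\cO_{\widetilde{M}}$ (here $\nu\le1$ because of the analytic quasi-transversality hypothesis), so the classical Flow-Box Theorem applies at every point of $\widetilde{X}$; one then descends back by properness. You avoid resolution entirely: the hypothesis $\partial^2(\cI_X)\subset\partial(\cI_X)+\cI_X$ is exploited directly to close up the pair $(u,v)=\big(f_i\circ\varphi_q,\partial(f_i)\circ\varphi_q\big)$ into a linear system $w'=C(t)w$ with coefficients bounded by a constant $L$ on $\overline{U}$, uniformly in $q$; the factorisation $\Phi_{12}(t)=t\,\Psi(t)$ with $\Psi(0)=\mathrm{Id}$ and a Gronwall bound $\|\Psi(t)-\mathrm{Id}\|\le e^{L|t|}-1$ then give a $\delta_0$ depending only on $L$. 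This is more elementary and makes very transparent why the decay of $\|v(0)\|$ near $\Sing(\partial)$ is harmless. The paper's approach, on the other hand, is set up so that the same resolution machinery simultaneously handles the non-analytic cases (ii) of Theorem~\ref{th:R1} and the higher-leaf-dimension Theorem~\ref{th:Rd}; your ODE argument does not obviously extend to those settings, since without the inclusion $\partial^2(\cI_X)\subset\partial(\cI_X)+\cI_X$ the system for $(u,v)$ no longer closes up linearly. Two minor points worth making explicit in a final write-up: the representation $\partial^2(f_i)=\sum_j a_{ij}f_j+\sum_j b_{ij}\partial(f_j)$ with analytic $a_{ij},b_{ij}$ is a priori only local, so one should shrink $U$ (or invoke coherence on a Stein neighbourhood) before fixing $L$; and in the complex case the Gronwall estimate should be run along the segment from $0$ to $t$ in $\IC$, which your phrasing already anticipates.
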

\noindent
The above result is a consequence of Theorem \ref{th:R1} below.

\subsection{Involutive Singular Distributions} 
\label{ssec:InM}
Following \cite[p. 281]{BaumBott}, an analytic singular foliation is a coherent sub-sheaf $\theta$ of $Der_M$ (where $Der_M$ is the sheaf of all derivations over $M$) which is closed under the Lie-Bracket operation. We say that a triple $(M,\theta,X)$ satisfies the \textit{Flow-Box} property at a point $p$ in $X$ if there exists a neighbourhood $U$ of $p$ and a coordinate system $(\pmb{x},\pmb{y})=(x_1,\ldots, x_t,y_{t+1},\ldots, y_m)$ over $U$ centered at $p$ such that $\theta$ is generated by $\{ \partial_{x_1}, \ldots, \partial_{x_t}\}$ and $X\cap U$ is a subset of $V(x_1,\ldots, x_t) {:=\{x_1 =\ldots = x_t =0\}}$. We say that the triple $(M,\theta,X)$ is \emph{transverse} (or, more commonly, $X$ is a transverse section of $\theta$) if $X$ is a regular sub-variety and at every point $p$ in $X$, the leaf $\mathcal{L}$ of $\theta$ passing through $p$ has maximal dimension and its tangent space has trivial intersection with the tangent space of $X$. The classical {Frobenius} theorem states that any triple $(M,\theta,X)$ which is transverse, satisfies the Flow-Box property. Following the same considerations which lead to the definition \ref{def:gFB}, we define:
\begin{definition}\label{def:gFBM}
We say that a triple $(M,\theta,X)$ satisfies the \emph{generalized Flow-Box property} at a point $p$ in $X$, if there exists a triple $(U,\delta,g)$, where $U$ is an open neighbourhood of $p$, $\delta>0$ is a real number and $g$ is a sub-Riemannian metric {(whose definition is recalled in subsection \ref{ssec:SRG})} over $M$ generated by a set $\{\partial_1, \ldots, \partial_d\}$ of local generators of $\theta$ such that: for every point $q \in U \cap X$, every $g$-ball with radius $\eta<\delta$:
\[
B_{\eta}^g(q) = \{a \in U ; d_g(a,q) <\eta\}
\]
intercepts $X$ only at $q$ and is homeomorphic to a $k_q$-euclidean ball, where $k_q$ is the dimension of {the subspace $L_q \subset T_pM$ generated by $\theta$}.
\end{definition}
\begin{remark}
We demand that $B_{\eta}^g(q)$ is homeomorphic to a ball in order to avoid ``infinitesimal" self-returns which would not be consistent with definition \ref{def:gFB}. More precisely, suppose that $\theta$ is one dimensional and that there exists a sequence of periodic orbits converging to $p$ with period converging to zero, that is, there exists sequences $\eta_i \to 0$ and $q_i \to p$ such that $B_{\eta_i}^g(q_i)$ is isomorphic to a circle. This would contradict definition \ref{def:gFB}, but would not contradict  
definition \ref{def:gFBM} if we did not demand that $B_{\eta}^g(q)$ is homeomorphic to a ball.
\end{remark}
\noindent
Such a definition coincides with the Flow-Box property whenever $(M,\theta,X)$ is transverse, but also extends to the singular locus of $\theta$ and $X$. Just as for line foliations, we consider the following property:

\begin{definition}
A triple $(M,\theta,X)$ is \emph{geometrically quasi-transverse} if at every point $p$ in $X$, the {Zariski} tangent space of $X$ at $p$ has trivial intersection with the subspace of $T_pM$ generated by $\theta$. 
\end{definition}

Under this condition, we can prove the following result:
\begin{theorem}
Let $(M,\theta,X)$ be a geometrically quasi-transverse triple and assume that $X$ is a non-singular curve, then $(M,\theta,X)$ satisfies the generalized Flow-Box property. Nevertheless, whenever $dim M\geq 4$, there exists a geometrically quasi-transverse triple $(M,\theta,X)$ (where $X$ is non-singular) that does not satisfy the {generalized} Flow-Box property.
\label{th:Rdsimpl}
\end{theorem}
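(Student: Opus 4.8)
\emph{Plan.} The statement has a positive half (non-singular curve) and a counterexample half ($\dim M\ge 4$), and I would treat them by quite different means.

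\emph{The counterexample.} Here I would simply recycle Theorem~\ref{th:R2simpl}(b). If $(M,\partial,X)$ is a geometrically quasi-transverse triple with $X$ non-singular, $\dim M\ge 4$, which fails the generalized Flow-Box property of Definition~\ref{def:gFB}, set $\theta:=\cO_M\cdot\partial$. This is a coherent, Lie-bracket-closed subsheaf of $Der_M$ of rank $\le 1$, and $(M,\theta,X)$ is geometrically quasi-transverse in the sense of the last Definition above: at $p\in X\setminus Sing(\partial)$ because $\partial(p)\notin T_pX$, and at $p\in X\cap Sing(\partial)$ because there $L_p=0$. Since $\theta$ is a line foliation, any admissible pair (a generator $h\partial$ with $h$ a unit, together with its associated sub-Riemannian metric) only reparametrizes — by a factor bounded above and below near any point — the one-dimensional horizontal flow off $Sing(\partial)$; hence $d_g$ is locally bi-Lipschitz to the distance in which $\partial$ is unit-speed. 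Consequently the ``return of an orbit to $X$ in arbitrarily small time near a point of $Sing(\partial)$'' that witnesses the failure of Definition~\ref{def:gFB} also defeats Definition~\ref{def:gFBM}: either a point of $X$ lies on a periodic orbit of $g$-length tending to $0$, so the relevant $g$-balls are circles rather than balls, or a second point of $X$ enters every $g$-ball of radius bounded away from $0$ — precisely the discrepancy the homeomorphic-to-a-ball clause is designed to detect (cf.\ the Remark after Definition~\ref{def:gFBM}). So $(M,\theta,X)$ is the desired foliated counterexample.

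\emph{The positive part: set-up and the ``ball'' clause.} Fix $p\in X$ and a relatively compact neighbourhood $U_0\ni p$ on which $\theta$ has analytic generators $\partial_1,\dots,\partial_d$; let $g$ be the associated sub-Riemannian metric and parametrize $X$ by an analytic embedding $\gamma\colon I\to U_0$, $\gamma(0)=p$, so that geometric quasi-transversality reads $\gamma'(s)\notin L_{\gamma(s)}$ (equivalently $T_{\gamma(s)}X\cap L_{\gamma(s)}=0$) for all $s\in I$. I must produce $U,\delta,g$ with: (A) $B^g_\eta(q)$ homeomorphic to a $k_q$-ball for $q\in U\cap X$, $\eta<\delta$; and (B) $B^g_\delta(q)\cap X=\{q\}$ for all $q\in U\cap X$. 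For (A): every $g$-horizontal path stays inside a single orbit of $\theta$, and since $\theta$ is an involutive analytic $\cO_M$-module, the orbit theorem (Hermann--Nagano--Sussmann) gives that the orbit $\cO_q$ through $q$ is an immersed analytic submanifold with $T_a\cO_q=L_a$ for all $a\in\cO_q$ and $\dim\cO_q=k_q=\dim L_q$. On $\cO_q$ the $\partial_i$ span the tangent bundle, so $g$ restricts there to a genuine Riemannian metric whose intrinsic distance coincides with $d_g$; thus $B^g_\eta(q)$ is a Riemannian metric ball of $\cO_q$, diffeomorphic (in particular homeomorphic) to a $k_q$-ball once $\eta$ is below the injectivity radius. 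When $p\notin\Sing\theta$ this, combined with the Frobenius normal form (in which a transverse curve meets each plaque exactly once), settles both (A) and (B) at once; the content is the case $p\in\Sing\theta$, where one needs a uniform lower bound for these injectivity radii as $q$ ranges over a compact sub-arc of $X$, across the finitely many points where $k_q$ jumps — which I would extract from the analyticity of $\theta$ and the local structure of its orbit partition near $p$ (the resolution-of-singularities input of Theorem~\ref{th:R1}).

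\emph{The positive part: the separation clause and the main obstacle.} I would argue (B) by contradiction and compactness. If no uniform $\delta$ exists, there are $s_n\ne t_n$ in $I$ with $\gamma(s_n),\gamma(t_n)$ in a common orbit and $\ell_n:=d_g(\gamma(s_n),\gamma(t_n))\to 0$. Since a $g$-horizontal path of length $\ell$ has Euclidean length $\le C\ell$ (the $\partial_i$ are bounded on $\overline{U_0}$), $d_{\mathrm{Eucl}}\le C\,d_g$, hence $s_n,t_n\to$ a common $s_*\in\overline I$; put $q_*=\gamma(s_*)$. Choose near-minimizing horizontal paths $c_n\colon[0,1]\to U_0$, $c_n'=\sum_i a^{(n)}_i\,\partial_i(c_n)$, with $\int_0^1\sqrt{\sum_i (a^{(n)}_i)^2}\,d\tau=\ell_n(1+o(1))$, so that $\int_0^1|a^{(n)}_i|\le\ell_n(1+o(1))$ and $c_n\to q_*$ uniformly. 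Dividing $\gamma(t_n)-\gamma(s_n)=\int_0^1\sum_i a^{(n)}_i\,\partial_i(c_n)\,d\tau$ by $\ell_n$ and passing to a subsequence, the right-hand side converges into $L_{q_*}$ (use $\partial_i(c_n)\to\partial_i(q_*)$ uniformly together with boundedness of $\ell_n^{-1}\!\int_0^1 a^{(n)}_i$), while the left-hand side equals $\dfrac{t_n-s_n}{\ell_n}\bigl(\gamma'(s_*)+o(1)\bigr)$. If $|t_n-s_n|/\ell_n$ stays bounded away from $0$ and $\infty$ along the subsequence, we get a nonzero vector of $T_{q_*}X$ lying in $L_{q_*}$, contradicting quasi-transversality; and $|t_n-s_n|/\ell_n\to\infty$ is impossible, as it would force $|\gamma(t_n)-\gamma(s_n)|/\ell_n\to\infty$ against $d_{\mathrm{Eucl}}\le C\,d_g$. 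The remaining — and, I expect, genuinely hard — regime is $|t_n-s_n|=o(\ell_n)$, where the first-order expansion is inconclusive and the near-minimizing $c_n$ traces a ``loop'' of size $\sim\ell_n\to0$ in the orbit, something that ought to be impossible for an analytic $\theta$ with a transverse curve through $q_*$. I would exclude it by passing to a local model: using quasi-transversality of the curve, perform a resolution of the pair $(\theta,X)$ near $p$ — exactly as in the proof of the vector-field Theorem~\ref{th:R1} — reducing $\theta$ to a monomial/log-smooth form relative to $X$, in which one reads off a polynomial lower bound $d_g(\gamma(s),\gamma(t))\ge c\,|s-t|^{N}$ ($c>0$, $N\in\IN$) for $s,t$ near $0$ in a common leaf, contradicting $\ell_n\to0$. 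Proving that the generalized Flow-Box property can be pulled back along such a resolution, and establishing this lower bound in the resolved model, is the technical core; everything else is compactness bookkeeping.
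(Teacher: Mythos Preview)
Your treatment of the counterexample half is fine and is exactly what the paper does: take the four-dimensional vector-field example and regard $\theta=\cO_M\!\cdot\partial$ as a line foliation; the bi-Lipschitz comparison you sketch is the right bridge between Definitions~\ref{def:gFB} and~\ref{def:gFBM}.

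For the positive half your route diverges from the paper's, and there is a genuine gap. The paper does \emph{not} argue metrically at all: it first observes (Lemma~\ref{lem:daqt}) that when $X$ is a non-singular curve the geometric quasi-transversality forces $\theta[\cI]+\cI$ to equal $\cO_p$ (or, on the divisor, $(x_1,\dots,x_{m-1},y^r)$), which is $\theta$-invariant; hence $\theta^2[\cI]\subset\theta[\cI]+\cI$ and the foliated ideal sheaf is \emph{analytically} quasi-transverse. Then Theorem~\ref{th:Rd} applies: one runs Proposition~\ref{prop:HSimplificado} with $\nu=1$ to make $\widetilde\theta[\widetilde\cI]+\widetilde\cI=\cO$, splits off a regular direction $\partial_Y$ by Proposition~\ref{prop:LT}, uses Proposition~\ref{prop:LT2} to reduce to $\theta_Z$ of strictly smaller leaf dimension, inducts, and descends through the blowings-up via Lemma~\ref{lem:20}. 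The curve hypothesis enters \emph{only} through Lemma~\ref{lem:daqt}.

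Your compactness argument is clean in the two easy regimes, but the ``loop'' regime $|t_n-s_n|=o(\ell_n)$ is not closed. Two concrete issues:
\begin{itemize}
\item[(i)] You propose to resolve ``exactly as in Theorem~\ref{th:R1}''. But the resolution you need is Proposition~\ref{prop:HSimplificado}, and its hypothesis is precisely that the chain $\sum_{i\le\nu}\theta^i[\cI]$ stabilizes. You never verify this; it is the content of Lemma~\ref{lem:daqt}, which you do not invoke. Without it there is no a~priori reason the $\theta$-invariant resolution terminates with $\widetilde\theta[\widetilde\cI]+\widetilde\cI=\cO$.
\item[(ii)] Even granting the resolved model, the ``polynomial lower bound $d_g(\gamma(s),\gamma(t))\ge c|s-t|^N$'' is asserted, not proved. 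In the resolved picture $\widetilde\theta$ is still a genuine (higher-rank, singular) distribution, and reading off such a bound is not immediate; the paper handles exactly this step by the leaf-reduction of Proposition~\ref{prop:LT2} together with induction on leaf dimension. Your sketch offers no substitute mechanism, and the appeal to Theorem~\ref{th:R1} does not help because that theorem is about a single vector field.
\end{itemize}
The same remark applies to your clause~(A): a uniform injectivity-radius bound across the jumps of $k_q$ is not something one gets for free from analyticity, and you again defer it to an unspecified resolution step.

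In short: the missing idea is Lemma~\ref{lem:daqt} (the curve case is automatically analytically quasi-transverse), which unlocks the paper's inductive machinery (Proposition~\ref{prop:HSimplificado}, Proposition~\ref{prop:LT2}, Lemma~\ref{lem:20}). Your direct metric argument is an attractive alternative viewpoint, but as written it stalls precisely at the place where that machinery does the work.
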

\noindent
We prove this result in {section \ref{ssec:mainTh} as a consequence of} theorem \ref{th:Rd}. In subsection \ref{ssec:AQT}, we introduce a notion of quasi-transversality based on the stability under certain operations, including blowings-up. We give a precise definition (definition \ref{def:AQT}) and statement of the result (Theorem \ref{th:Rd}) on section \ref{sec:AnyDim}. 

%
\subsection{Structure and notations of the manuscript}
\label{ssec:Structure}
The main technique used in the manuscript is resolution of singularities subordinated to the singular distribution $\theta$. We introduce the main technical tool (resolution by $\theta$-invariant centers) in section \ref{sec:RS} and we rely on \cite{BelJA,BeloT} for the proofs. In section \ref{sec:OneDim} we prove the main results for vector-fields, and in section \ref{sec:AnyDim} we provide the proofs for general foliations.

Since we use resolution of singularities arguments, it is convenient to work with ideal sheaves and to keep track of divisors. We consider a quadruple $(M,\theta,\mathcal{I},E)$, which we call \textit{foliated ideal sheaf}, where 
\begin{itemize}
\item $\mathcal{I}$ is a reduced ideal sheaf (whose support $V(\mathcal{I})$ plays the role of the sub-variety $X$);
\item $E$ is a simple normal crossing divisor on $M$ (i.e. {$E$ is the union of smooth connected hypersurfaces and, at each point $p\in E$}, there exists a coordinate system $\pmb{x} = (x_1,\ldots, x_m)$ centered at $p$ such that $E$ is locally given by $\{\Pi_{i=1}^r x_i = 0\}$ for some $r\leq m$);
\item {$\theta$ is coherent sub-sheaf of $Der_M(-\log E)$ closed under the Lie-bracket operation. In particular, $\theta$ is tangent to $E$ (i.e. its local sections $\partial$ are everywhere tangent to $E$).} 
\end{itemize}
If $\theta$ is a $1$-dimensional free singular distribution (i.e. at each point $p \in M$, $\theta$ is locally generated by a vector-field $\partial$), we say that $(M,\theta,\mathcal{I},E)$ is a $1$-foliated ideal sheaf. In what follows, we re-introduce the definitions of quasi-transversality and generalized Flow-Box property for foliated ideal sheaves.

\subsection{Origin of the Problem}
\label{ssec:Or}
Originally, this work aimed to prove that the semi-universal equisingular unfolding of one-forms constructed by J.-F. Mattei in \cite[Theorem (3.2.1)]{Mattei}, is actually universal. This is an important problem in the theory of families of one-forms and foliations. Mattei's construction leads to a local problem involving the action of an algebraic group over an analytic sub-variety. This problem was brought to our attention by J.-F. Mattei in a private communication and stands as the main technical obstacle to prove that the semi-universal unfolding is actually universal. More precisely, the original question can be stated as follows:

\begin{problem}
Consider an (not necessarily compact) algebraic group $G$ acting over a regular manifold $M$ and an analytic and regular sub-variety $X$ of $M$. Suppose that at every point $p$ in $X$, the orbit of $G$ passing through $p$ is ``quasi-transverse" to $X$, i.e. the intersection of the tangent space of the orbit with the tangent space of $X$ is trivial. Is it possible to find a neighbourhood $U \subset G$ of the neutral element and a neighbourhood $W$ of $p$ such that, for every $q$ in $W$ and every $g$ in $U$, $g(q) \in X \implies g(q)=q$?
\end{problem}

Stating this problem for the Lie algebra instead of the Lie group yields the definitions of quasi-transversality and generalized Flow-Box introduced here and to the question: does geometrical quasi-transversality implies the Flow-Box property? Although we give a negative answer in Theorem \ref{th:R2simpl}, the original question of J.-F. Mattei is still open, since it concerns an specific algebraic Lie group. We do show, nevertheless, that an approach based on the Lie Algebra is insufficient since, in this context, the answer is negative.

Based on these considerations, we could also introduce a notion of local generalized slices for (not necessarily compact) Lie groups. We recall that the existence of slices for compact Lie group actions is an important tool for the global study of Lie group actions. Under some additional conditions, it is also possible to obtain global slices over non-compact Lie groups (see \cite{Pal}). In any case, up to our knowledge, no local notion of section is given for general non-compact groups; the generalized Flow-Box property might be a natural way to do so.
\section*{Acknowledgments}
We would like to thank Jean-Fran\c cois Mattei for bringing the problem to our attention and for the very useful discussions on the topic. The second author  was supported by ANR-16-CE40-0008. 

\section{Background: Resolution of singularities}
\label{sec:RS}

We briefly recall some basic notations and ideas of resolution of singularities. {Let $E \subset M$ be a simple normal crossing divisor (that is, $E$ is the union of smooth hypersurfaces and, at each point $p\in E$, there exists a coordinate system $x= (x_1,\ldots,x_n)$ such that $E=\{\Pi_{i=1}^m x_i^{\epsilon_i} = 0\}$, where $\epsilon_i \in \{0,1\}$).} {Given a blowing-up $\sigma: \widetilde{M} \to M$, denote by $F $ its exceptional divisor. We say that the blowing-up is \emph{admissible} in respect to a divisor $E \subset M$ if the center $\mathcal{C}$ is a closed and regular sub-manifold of $M$ which has SNC with $E$ (that is, there exists a coordinate system $\pmb{x}= (x_1,\ldots, x_m)$ where the reduced ideal sheaf whose support is $\mathcal{C}$, which we denote by $\mathcal{I}_{\mathcal{C}}$, is locally given by $\mathcal{I}_{\mathcal{C}} = (x_1, \ldots, x_t)$ and $E = \{\Pi_{i=1}^m x_i^{\epsilon_i} = 0\}$, where $\epsilon_i \in \{0,1\}$). If the blowing-up is admissible, we denote it by $\sigma: (\widetilde{M},\widetilde{E})\to (M,E)$, where $\widetilde{E} = \sigma^{-1}(E) \cup F$.}

{Consider an ideal sheaf $\mathcal{I}$}. If {the center of blowing-up} $\mathcal{C}$ is contained in the support of $\mathcal{I}$, we say that $\sigma$ is a blowing-up with \emph{order one} in respect to $\mathcal{I}$. In this case we denote the blowing-up by $\sigma: (\widetilde{M},\widetilde{\mathcal{I}},\widetilde{E})\to (M,\mathcal{I},E)$, where $\widetilde{\mathcal{I}}$ is the weak transform of $\mathcal{I}$, that is $\mathcal{I}_F \widetilde{\mathcal{I}} = \sigma^{\ast}\mathcal{I}$, where $\mathcal{I}_F$ is the reduced ideal sheaf whose support is {the exceptional divisor} $F$ {of $\sigma$ (c.f. \cite[Definition 2.2]{BM})}. {We now consider an involutive singular distribution $\theta$ which is tangent to $E$.}

\begin{remark}\label{rk:thetaAction}
Since $\theta$ is a coherent sub-sheaf of the derivations over $M$, it naturally acts on the structure sheaf $\mathcal{O}_M$ and, thus, over ideal sheaves $\mathcal{I}$. More precisely, given a point $p \in M$, the ideal $\theta[\mathcal{I}]\cdot \mathcal{O}_p$ is defined as the ideal generated by 
\[
\{\partial(f); f \in \mathcal{I}\cdot \mathcal{O}_p,\, \partial \in \theta \cdot \mathcal{O}_p\}
\]
where $\partial$ is a local generator of $\theta$. We denote by $\theta[\mathcal{I}]$ the ideal sheaf whose stalks are defined as above. Since $\theta[\mathcal{I}]$ is again an ideal sheaf, we can iterate the process a finite number of times, say $k$, {and obtain an ideal} which we denote by $\theta^{k}[\mathcal{I}]$.
\end{remark}

\begin{definition}[$\theta$-invariant blowing-up]
We say that a blowing-up $\sigma$ is $\theta$\emph{-invariant}, if the center of blowing-up is $\theta$-invariant, that is, if $\theta[\mathcal{I}_{\mathcal{C}}] \subset \mathcal{I}_{\mathcal{C}}$, where $\mathcal{I}_{\mathcal{C}}$ is the reduced ideal sheaf whose support is $\mathcal{C}$.
\end{definition}
\begin{remark}\label{rk:InvReg}
\begin{itemize}
\item[(1)]
Given a $\theta$-invariant blowing-up of order one $\sigma: (\widetilde{M},\widetilde{\mathcal{I}},\widetilde{E})\to (M,\mathcal{I},E)$, the total transform $\widetilde{\theta} = \sigma^{\ast}(\theta)$ is well-defined{,} analytic {and tangent to $\widetilde{E}$}. {Indeed, f}ollowing the notation introduced in Remark \ref{rk:thetaAction}, {we note} that:
\[
{\theta^{\ast}[\mathcal{I}_F] = \sigma^{\ast}(\theta[\mathcal{I}_{\mathcal{C}}]) \subset \sigma^{\ast}(\mathcal{I}_{\mathcal{C}}) = \mathcal{I}_F}
\]
where $F$ is the exceptional divisor of $\sigma$. {Therefore, we can denote the blowing-up by}
\[
{\sigma : (\widetilde{M},\widetilde{\theta},\widetilde{\mathcal{I}},\widetilde{E})\to (M,\theta,\mathcal{I},E)}
\]
\item[(2)] {Furthermore, by using \cite[Lemma 3.2.i]{BelJA} and part (1), we get that}: 
\[
\mathcal{I}_F \cdot \left( \sum_{i=0}^{\nu}\widetilde{\theta}^i[\widetilde{\mathcal{I}}]\right)  = \sigma^{\ast}\left(\sum_{i=0}^{\nu}\theta^i[\mathcal{I}]\right), \quad \forall \,\nu\in \mathbb{N}
\]  
\end{itemize}
\end{remark}
\begin{proposition}
Let $(M,\theta, \mathcal{I},E)$ be a foliated ideal sheaf (the leaf-dimension of $\theta$ is not necessarily one). Consider a relatively compact open set $M_0$ of $M$ and {denote by $\mathcal{I}_0 = \mathcal{I}|_{M_0}$ and $E_0 = E|_{M_0}$. L}et $\nu$ be a natural number such that $\theta^{\nu+1}(\mathcal{I}) \subset \sum_{i=0}^{\nu}\theta^i(\mathcal{I})$ in $M_0$. Then, there exists a sequence of $\theta$-invariant blowings-up of order one:
\[
\begin{tikzpicture}
  \matrix (m) [matrix of math nodes,row sep=3em,column sep=3em,minimum width=1em]
  {(\widetilde{M},{\widetilde{\theta},}\widetilde{\mathcal{I}}, \widetilde{E}) = (M_r,{\theta_r,}\mathcal{I}_r,E_r) & \cdots & (M_0,{\theta_0,}\mathcal{I}_0,E_0)\\};
  \path[-stealth]
    (m-1-1) edge node [above] {$\sigma_r$} (m-1-2)
    (m-1-2) edge node [above] {$\sigma_1$} (m-1-3);
\end{tikzpicture}
\]
such that: i) the sequence of blowings-up is an isomorphism outside of the support of $\sum_{i=0}^{\nu}\theta^i(\mathcal{I})$; ii) the pulled back foliation $\widetilde{\theta}$ is (well-defined{,}) analytic {and tangent to $\widetilde{E}$}. Furthermore $\sum_{i=0}^{\nu}\widetilde{\theta}^i(\widetilde{\mathcal{I}})$ has empty support (that is, $ \sum_{i=0}^{\nu}\widetilde{\theta}^i(\widetilde{\mathcal{I}})=\mathcal{O}_{\widetilde{M}}$).
\label{prop:HSimplificado}
\end{proposition}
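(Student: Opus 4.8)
The plan is to reduce Proposition \ref{prop:HSimplificado} to the already-developed machinery of resolution of singularities for foliated ideal sheaves from \cite{BelJA,BeloT}, using the key auxiliary ideal $\mathcal{J} := \sum_{i=0}^{\nu}\theta^i(\mathcal{I})$. First I would observe that the hypothesis $\theta^{\nu+1}(\mathcal{I}) \subset \mathcal{J}$ in $M_0$ makes $\mathcal{J}$ a \emph{$\theta$-closed} ideal sheaf on $M_0$, i.e. $\theta[\mathcal{J}] \subset \mathcal{J}$: indeed $\theta(\theta^i(\mathcal{I})) = \theta^{i+1}(\mathcal{I})$ which lies in $\mathcal{J}$ for $i < \nu$ by definition and in $\mathcal{J}$ for $i = \nu$ by hypothesis. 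A $\theta$-closed ideal is exactly the kind of object for which the resolution results of \cite{BelJA} apply, since the center produced at each stage of a canonical (functorial) desingularization of the support of $\mathcal{J}$ will automatically be $\theta$-invariant — this is the content of the main theorems there, and it is what guarantees conclusion (ii) and the analyticity and tangency-to-$\widetilde{E}$ of the pulled-back $\widetilde{\theta}$ via Remark \ref{rk:InvReg}(1).

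Next I would run principalization/resolution of the ideal sheaf $\mathcal{J}_0$ on the relatively compact $M_0$ (using, say, the constructive resolution applied to $V(\mathcal{J}_0)$), which produces a finite sequence of admissible blowings-up $\sigma_i$ whose centers $\mathcal{C}_i$ are smooth, have SNC with the accumulated exceptional divisor, are $\theta$-invariant by the previous paragraph, and are contained in the support of the (transform of) $\mathcal{J}$ — hence have order one with respect to the transform of $\mathcal{I}$, since $\mathrm{Supp}(\mathcal{J}) = \mathrm{Supp}(\sum \theta^i(\mathcal{I})) \supseteq V(\mathcal{C}_i)$ forces $\mathcal{C}_i \subset \mathrm{Supp}(\widetilde{\mathcal{I}})$ after noting $V(\mathcal{I}) \subseteq V(\mathcal{J})$ and that the construction only blows up inside the support. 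The fact that the sequence is an isomorphism outside $\mathrm{Supp}(\mathcal{J})$ is built into constructive resolution (centers lie in the singular/non-trivial locus), giving conclusion (i). The crucial bookkeeping step is Remark \ref{rk:InvReg}(2): applying it inductively along the tower gives
\[
\mathcal{I}_{F_r}\cdots\mathcal{I}_{F_1}\cdot\left(\sum_{i=0}^{\nu}\widetilde{\theta}^i(\widetilde{\mathcal{I}})\right) = \sigma^{\ast}\left(\sum_{i=0}^{\nu}\theta^i(\mathcal{I})\right) = \sigma^{\ast}\mathcal{J},
\]
so that $\sum_{i=0}^{\nu}\widetilde{\theta}^i(\widetilde{\mathcal{I}})$ equals $\sigma^\ast\mathcal{J}$ divided by a monomial in the exceptional divisors; since resolution makes $\sigma^\ast\mathcal{J}$ a principal monomial ideal supported on exceptional components, the quotient is exactly $\mathcal{O}_{\widetilde{M}}$. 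That yields the final assertion that $\sum_{i=0}^{\nu}\widetilde{\theta}^i(\widetilde{\mathcal{I}})$ has empty support.

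I anticipate the main obstacle is not any single computation but rather verifying the compatibility at each blowing-up of the three simultaneous constraints — $\theta$-invariance of the center, order-one with respect to the evolving weak transform of $\mathcal{I}$, and SNC with the evolving divisor — and checking that the numerical control $\nu$ is preserved (or harmlessly controlled) along the tower, so that Remark \ref{rk:InvReg}(2) can legitimately be iterated with the same $\nu$ at every stage. One must also be slightly careful that passing to the relatively compact $M_0$ is genuinely needed to guarantee the resolution terminates after finitely many steps $r$ and that $\nu$ can be chosen uniformly; this is where the hypothesis ``$\theta^{\nu+1}(\mathcal{I}) \subset \sum_{i=0}^{\nu}\theta^i(\mathcal{I})$ in $M_0$'' is used essentially, since on all of $M$ no such finite $\nu$ need exist. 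Modulo invoking \cite{BelJA,BeloT} for the existence of the $\theta$-invariant resolution, the argument is then a formal consequence of Remarks \ref{rk:thetaAction} and \ref{rk:InvReg}.
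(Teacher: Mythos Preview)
Your proposal is correct and follows exactly the approach the paper invokes: the paper's own proof simply cites \cite[Proposition 5.2]{BelJA} and \cite[Proposition 5.4.1]{BeloT} together with functorial resolution \cite{BM}, and those references carry out precisely the argument you sketch (set $\mathcal{J}=\sum_{i=0}^{\nu}\theta^i[\mathcal{I}]$, observe it is $\theta$-closed, run functorial principalization of $\mathcal{J}$ to obtain $\theta$-invariant centers, and track transforms via Remark~\ref{rk:InvReg}(2)). One small slip to fix: the inclusion you need is $V(\mathcal{J})\subseteq V(\mathcal{I})$ (since $\mathcal{I}\subset\mathcal{J}$), not the reverse --- this is what guarantees that the centers, which lie in the support of (the transform of) $\mathcal{J}$, automatically lie in the support of (the transform of) $\mathcal{I}$ and hence have order one with respect to $\mathcal{I}$.
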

\begin{proof}
This result follows from the same argument as \cite[Proposition 5.2]{BelJA} and relies on the functorial property of resolution of singularities \cite{BM}; a complete proof can be found in \cite[Proposition 5.4.1]{BeloT}.
\end{proof}

\begin{remark}\label{rk:HSimplificado}
If $(M,\theta,\mathcal{I},E)$ is geometrically quasi-transverse (see definitions \ref{def:D1} and \ref{def:GQT}), then condition $(i)$ of Proposition \ref{prop:HSimplificado} implies that $\sigma$ is an isomorphism outside of $Sing(\theta) \cup E$. Indeed, the quasi-transversality hypothesis guarantees that all $\theta$-invariant centres must be contained in $Sing(\theta) \cup E$ and its transform. 
\end{remark}

\section{Generalized Flow-Box property for vector-fields}
\label{sec:OneDim}
Following subsection \ref{ssec:Structure}, we work with $1$-foliated ideal sheaves $(M,\theta,\mathcal{I},E)$ instead of triples $(M,\partial,X)$ (where $Supp \, \mathcal{I} = X$ and $\theta$ is generated by $\partial$). We start by extending the definitions given in subsection \ref{ssec:InV}:

\begin{definition}\label{def:D1}
Consider a $1$-foliated ideal sheaf $(M,\theta,\mathcal{I},E)$, the variety $X = Supp \, \mathcal{I}$ and vector-fields $\partial_U$, defined in open sets $U$ (which cover $M$), that generate $\theta$ on $U$. We say that $(M,\theta,\mathcal{I},E)$:
\begin{itemize}
\item is \emph{geometrically quasi-transverse} if all the triples $(U,\partial_U,X\setminus E)$ are geometrically quasi-transverse.
\item is \emph{analytically quasi-transverse} if it is geometrically quasi-transverse and 
\[
\theta^2[\mathcal{I}] \subset \mathcal{I} + \theta[\mathcal{I}].
\]
\item satisfies the generalized \emph{Flow-Box property} if for every compact set $K \subset X$, there exists a real number $\delta>0$ such that for every point $p \in K \setminus (Sing(\partial) \cup E)$ we have that $\varphi_p(t) \notin X$ for all $0<|t|<\delta$, where $\varphi_p(t)$ is the orbit of $\partial_U$, for some $U$, such that $\varphi_p(0) = p$.
\end{itemize}
\end{definition}

\begin{remark}
Given a geometrically quasi-transverse $1$-foliated ideal sheaf $(M,\theta,\mathcal{I},E)$, note that $\theta$ may be tangent to $X \cap E$. 
\end{remark}

We are ready to state the main result of this section:
\begin{theorem}
Let $(M,\theta,\mathcal{I},E)$ be a geometrically quasi-transverse $1$-foliated ideal sheaf. If one of the following conditions is verified:
\begin{itemize}
\item[i)] The $1$-foliated ideal sheaf $(M,\theta,\mathcal{I},E)$ is analytically quasi-transverse;
\item[ii)] The ideal sheaf $\mathcal{I}$ is regular and the variety $X: = Supp \, \mathcal{I}$: has dimension one or co-dimension one; has normal crossings with $E$ {(that is, if $p \in X \cap E$, there exists a local coordinate system $x=(x_1,\ldots,x_n)$ such that $X = V(x_1,\ldots,x_t)$ and $E= \{x_i^{\epsilon_i}=0\}$ where $\epsilon_i \in \{0,1\}$)}; and is not contained in $E$.
\end{itemize}
Then $(M,\theta,\mathcal{I},E)$ satisfies the generalized Flow-Box property.
\label{th:R1}
\end{theorem}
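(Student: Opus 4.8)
The plan is to reduce, via resolution of singularities subordinated to $\theta$ (Proposition~\ref{prop:HSimplificado}), to a situation in which $(M,\partial,X)$ is transverse in the classical sense along all of $X$, and then to conclude with the usual Flow-Box Theorem through Remark~\ref{rk:FLBOX}(2); the one genuinely new point is the local analysis at the points of $X\cap E$. Concretely, I would first fix $\nu\in\IN$ with $\theta^{\nu+1}(\mathcal I)\subset\sum_{i=0}^\nu\theta^i(\mathcal I)$ on relatively compact opens: under hypothesis (i) one takes $\nu=1$; under (ii) such a $\nu$ exists because the ascending chain of ideals $\sum_{i\le k}\theta^i(\mathcal I)$ stabilises locally (Noetherianity) and stabilisation is an open condition (coherence), so a uniform $\nu$ exists on relatively compact opens. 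Applying Proposition~\ref{prop:HSimplificado} and Remark~\ref{rk:HSimplificado} then yields $\sigma\colon(\widetilde M,\widetilde\theta,\widetilde{\mathcal I},\widetilde E)\to(M,\theta,\mathcal I,E)$, an isomorphism over $M\setminus(\mathrm{Sing}(\theta)\cup E)$, with $\widetilde\theta$ analytic and tangent to $\widetilde E$ and $\sum_{i=0}^\nu\widetilde\theta^{\,i}(\widetilde{\mathcal I})=\mathcal O_{\widetilde M}$; since the centres lie in $X\cap(\mathrm{Sing}(\theta)\cup E)$ and, by functoriality of resolution, have SNC with $X$ and $E$, the smoothness, dimension/codimension, SNC and $X\not\subseteq E$ conditions of (ii) pass to $\widetilde X$. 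It then suffices to prove the statement for the resolved datum: $\sigma$ is proper; an orbit issued from $M\setminus E$ stays off $E$ (as $\theta$ is tangent to $E$) and an orbit issued from $M\setminus\mathrm{Sing}(\theta)$ stays off $\mathrm{Sing}(\theta)$ (uniqueness and analyticity); and over $M\setminus(\mathrm{Sing}(\theta)\cup E)$ the map $\sigma$ is an isomorphism that preserves the time parametrisation of orbits. Hence a ``bad sequence'' $p_n\to p_\infty$ in $X$ (with $p_n\notin\mathrm{Sing}(\theta)\cup E$, $t_n\to 0$, $t_n\neq 0$, $\varphi_{p_n}(t_n)\in X$) lifts, after passing to a subsequence (using that $\sigma^{-1}$ of a compact is compact), to a bad sequence for $(\widetilde M,\widetilde\theta,\widetilde{\mathcal I},\widetilde E)$, and a uniform $\delta$ upstairs yields one downstairs.

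Rename the resolved datum $(M,\theta,\mathcal I,E)$, so $\sum_{i=0}^\nu\theta^i(\mathcal I)=\mathcal O_M$. Then $X\cap\mathrm{Sing}(\theta)=\emptyset$: at $q\in\mathrm{Sing}(\theta)$ the local generator $\partial$ vanishes, so $\partial(\fm_q)\subset\fm_q$; if also $q\in X$ then $\mathcal I_q\subset\fm_q$, and inductively $\theta^i[\mathcal I]_q\subset\fm_q$, contradicting $\sum_{i\le\nu}\theta^i[\mathcal I]_q=\mathcal O_q$. Now suppose the generalised Flow-Box property fails and take a bad sequence $p_n\to p_\infty\in X$. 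By the above $p_\infty\notin\mathrm{Sing}(\theta)$; if moreover $p_\infty\notin E$, then near $p_\infty$ geometric quasi-transversality is ordinary transversality, and Remark~\ref{rk:FLBOX}(2) gives a uniform $\delta$ near $p_\infty$, a contradiction. So $p_\infty\in X\cap E$ with $\partial(p_\infty)\neq 0$; straightening $\partial=\partial_x$ near $p_\infty=0$, the divisor $E$ — being $\partial_x$-invariant and SNC — becomes a union of coordinate hyperplanes among $y_2,\dots,y_m$.

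Now comes the crux, the behaviour near $X\cap E$. Under hypothesis (i), $\nu=1$ gives $\mathcal I_0+\theta[\mathcal I]_0=\mathcal O_0$; evaluating at $0\in X$ shows that some $f\in\mathcal I_0$ has $\partial(f)(0)\neq 0$, i.e.\ $(M,\partial,X)$ is transverse at the arbitrary point $0\in X\cap E$, hence along all of $X$, and Remark~\ref{rk:FLBOX}(2) finishes. Under hypothesis (ii), if $X$ is a smooth curve then SNC of $X$ with $E$ forces $T_0X$ not to lie in the tangent hyperplane of any $E$-component through $0$; as $\partial_x(0)\neq 0$ lies in all such hyperplanes, $\partial_x(0)\notin T_0X$, so transversality holds at $0$ and Remark~\ref{rk:FLBOX}(2) applies. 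If $X=\{f=0\}$ is a smooth hypersurface, SNC means $df(0)$ is independent of the differentials of the $E$-components through $0$; if $\partial_x f(0)\neq 0$ we are transverse and done, so assume $\partial_x f(0)=0$. Then $df(0)$ has nonzero component along some coordinate $y_k$ with $\{y_k=0\}\not\subset E$, so $X=\{y_k=g(x,\bz)\}$ with $g(0)=0$ and $\partial_x g(0)=0$, where $\bz$ collects the remaining coordinates and the $E$-coordinates form a subtuple of $\bz$. Geometric quasi-transversality gives $\partial_x g\neq 0$ on $X\setminus E$, i.e.\ $\{\partial_x g=0\}\subset\{(\text{product of }E\text{-coordinates})=0\}$ near $0$; hence $\partial_x g=(\text{monomial in the }E\text{-coordinates})\cdot(\text{unit})$ with the monomial non-constant, and integrating in $x$, $g(x,\bz)=g(0,\bz)+(\text{that monomial})\cdot x\cdot(\text{unit})$. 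On the orbit through $(x_0,\by_0)\in X\setminus E$ the $E$-coordinates of $\by_0$ are nonzero, so $\varphi_{(x_0,\by_0)}(t)\in X$ reduces to $x\,u(x,\bz_0)=x_0\,u(x_0,\bz_0)$ with $u$ a unit; local injectivity of $x\mapsto x\,u(x,\bz_0)$, uniform for $\bz_0$ near $0$, forces $x=x_0$, so the orbit meets $X$ only at $x_0$ within a neighbourhood uniform near $0$ — contradicting the bad sequence. Covering a compact $K\subset X$ by finitely many such neighbourhoods then produces the required $\delta$.

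The hard part will be this last step under hypothesis (ii): one must show that, \emph{after the resolution of the first step}, geometric quasi-transversality together with the SNC condition really does exclude returns of orbits ``hidden'' along $X\cap E$. The decisive computation is the factorisation $\partial_x g=(\text{monomial in the }E\text{-coordinates})\cdot(\text{unit})$, forced by geometric quasi-transversality off $E$; it is exactly what keeps the return time uniformly bounded below, and it is the place where the low-dimension hypotheses (which guarantee that $X$ is, locally near $X\cap E$, a graph over an orbit-transverse coordinate) are genuinely used. Everything else is either resolution bookkeeping or an appeal to the classical Flow-Box Theorem via Remark~\ref{rk:FLBOX}(2).
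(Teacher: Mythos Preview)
Your overall architecture --- resolve via Proposition~\ref{prop:HSimplificado}, then analyse locally near $X\cap E$ --- matches the paper, and your treatment of case~(i) and of the curve case in~(ii) is essentially the paper's argument. The codimension-one case is where you diverge, and there your argument has a genuine gap over $\IR$.

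The step that fails is the implication
\[
\{\partial_x g=0\}\subset\{\text{product of $E$-coordinates}=0\}
\ \Longrightarrow\
\partial_x g=(\text{monomial in the $E$-coordinates})\cdot(\text{unit}).
\]
Over $\IC$ this is fine (the local ring is a UFD and every irreducible factor of $\partial_x g$, having its zero set inside a union of coordinate hyperplanes, must be one of those coordinates up to a unit), and your argument then gives a clean alternative to the paper's Case~III. Over $\IR$, however, the implication is false: take $M=\IR^3$ with coordinates $(x,y,z)$, $\partial=\partial_x$, $E=\{z=0\}$, and $X=\{y=z^2x+\tfrac{1}{3}x^3\}$. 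Then $\partial_x g=z^2+x^2$ vanishes only at the origin (hence its zero locus lies in $E$), the data are geometrically quasi-transverse, $X$ has SNC with $E$ and is not contained in $E$, and $\sum_{i\le 3}\partial_x^{\,i}(\mathcal I)=\mathcal O$ --- so this is exactly the post-resolution situation you are analysing --- yet $z^2+x^2$ is not a monomial in $z$ times a unit. Your subsequent ``integrate and invert'' step therefore has no footing in the real case.

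The paper treats the real codimension-one situation (its Case~IV) by a different mechanism: rather than forcing transversality or a monomial normal form, it observes that the tangency locus $W=V(f,\partial f)$ has codimension at least two in $X$, chooses a normal field $\mathcal N$, and uses that $\langle\mathcal N,\partial\rangle$ does not change sign across $W$ (since $X\setminus W$ is connected). This forces every tangency to have odd order, so orbits cross $X$ monotonically from one side to the other and cannot return in small time. You will need either this orientation argument or something equivalent to close the real hypersurface case; the factorisation route cannot be salvaged as stated.
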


\begin{proof}
Fix a point $p$ in $X = V(\mathcal{I})$ and a vector-field $\partial$ which generates $\theta\cdot \mathcal{O}_p$. By Proposition \ref{prop:HSimplificado} there exists a relatively compact neighbourhood $M_0$ of $p$ and a sequence of blowings-up of order one:
\[
\begin{tikzpicture}
  \matrix (m) [matrix of math nodes,row sep=3em,column sep=3em,minimum width=1em]
  {(\widetilde{M},\widetilde{\mathcal{I}}, \widetilde{E}) = (M_r,\mathcal{I}_r,E_r) & \cdots & (M_0,\mathcal{I}_0,E_0)\\};
  \path[-stealth]
    (m-1-1) edge node [above] {$\sigma_r$} (m-1-2)
    (m-1-2) edge node [above] {$\sigma_1$} (m-1-3);
\end{tikzpicture}
\]
such that: i) the sequence of blowings-up is an isomorphism outside of $E\cup Sing(\partial)$ (see Remark \ref{rk:HSimplificado}; ii) the pulled back vector-field $\widetilde{\partial}$ is analytic and $\sum_{i=0}^{\nu}\widetilde{\partial}^i(\widetilde{\mathcal{I}})=\mathcal{O}_{\widetilde{M}}$. In particular, by $(i)$, the inverse image of all regular orbits of $\partial$ outside of $E$ are regular orbits of $\widetilde{\partial}$ outside of $\widetilde{E}$ (i.e there is no change on the time of the orbits). Thus, since blowings-up are proper morphisms, we just need to verify that $(\widetilde{M}, \widetilde{\theta},\widetilde{\mathcal{I}},\widetilde{E})$ satisfies the generalized Flow-Box property {in a neighborhood of every point in $V(\widetilde{\mathcal{I}})$}. In what follows, we denote by $\widetilde{X}$ the sub-variety $V(\widetilde{\mathcal{I}})$ and we divide in four cases depending on the initial hypothesis:

\medskip\noindent
\emph{Case I - $(M,\theta,\mathcal{I},E)$ is analytically quasi-transverse:} In this case, $\nu \leq 1$, which implies that {$\widetilde{\partial}(\widetilde{\mathcal{I}}_p) = \mathcal{O}_{\widetilde{M},p}$ for any point $p\in\widetilde{X}$}. Thus, the Flow-box theorem is valid at each point of the sub-variety $\widetilde{X}$ and we conclude {by Remark \ref{rk:FLBOX}(2)}.

\medskip\noindent
\emph{Case II - The ideal sheaf $\mathcal{I}$ is regular and the variety $X=V(\mathcal{I})$ has dimension one:} Note that $\widetilde{X} \cap \widetilde{E}$ has dimension zero. Since $ \widetilde{E}$ is invariant by $ \widetilde{\theta}$ (which is {regular over $\widetilde{X}\cap \widetilde{E}$}), we conclude that $ \widetilde{\theta}$ must be transverse to $\widetilde{X}$ also over $\widetilde{X} \cap \widetilde{E}$. Thus, the Flow-box theorem is valid at each point of the sub-variety $\widetilde{X}$ and we conclude {as above}.

\medskip\noindent
\emph{Case III - The ideal sheaf $\mathcal{I}$ is regular, the variety $V(\mathcal{I})$ has co-dimension one and $\mathbb{K} = \mathbb{C}$:} Let $q$ be a point of $\widetilde{X}$, the function $f$ be a local generator of $\widetilde{\mathcal{I}}$ and consider the sub-variety $W:=V(f,\widetilde{\partial}(f))$. Assume, by contradiction, that $\widetilde{\partial}(f)$ is not a unit, i.e that $W \neq \emptyset$. Then, {since $W$ has codimension two and is empty outside of $\widetilde{E}$ (by geometrically quasi-transversality)}, $W$ is a non-empty co-dimension one {sub}variety {of} $\widetilde{E}$ which must be equal to $\widetilde{X}\cap \widetilde{E}$. Since $\widetilde{E}$ is invariant by $\widetilde{\theta}$ we conclude that $W$ is also invariant by $\widetilde{\theta}$. This contradicts the fact that $\widetilde{\partial}^k(f)$ is a unit for some $k \le \nu$. Thus, $\widetilde{\partial}(f)$ must be a unit itself and $ \widetilde{\theta}$ must be transverse to $\widetilde{X}$ at every point. Thus, the Flow-box theorem is valid at each point of the sub-variety $\widetilde{X}$ and we conclude {as above}.

\medskip\noindent
\emph{Case IV - The ideal sheaf $\mathcal{I}$ is regular, the variety $V(\mathcal{I})$ has co-dimension one and $\mathbb{K} = \mathbb{R}$:} Let $q$ be a point of $\widetilde{X}$, the function $f$ be a generator of $\widetilde{\mathcal{I}}$ and consider the sub-variety $W:=V(f,\widetilde{\partial}(f))$. {The same arguments of Case III are valid if $W$ has codimension one in $\widetilde{X}$ or in $\widetilde{E}$, so we may assume that $W$ is a sub-variety of} co-dimension at least two {of} $\widetilde{X}$ and $\widetilde{E}$. Now, since $\widetilde{X}$ is locally orientable, let {$g$ be an analytic Riemannian metric in a neighborhood $U$ of $q$ and set $\mathcal{N} = \nabla_g(f)$ in $U$}. The function:
\[
\begin{array}{cccc}
\phi: & U \cap \widetilde{X} & \longrightarrow & \mathbb{R} \\
 & a & \mapsto & \left<\mathcal{N}, \widetilde{\partial} \right>(a)
\end{array}
\]
is continuous and equal to zero, if and only if, $a$ is point in $W$. Now, since $W$ has codimension at least two in $\widetilde{X}$,  without loss of generality, we can assume that $\phi \geq 0$. This implies that the order of tangency between $\tilde{\partial}$ and $\tilde{X}$ is odd. Therefore the regular orbits of ${\tilde{\partial}}$ always cross $\widetilde{X}$ from the negative to the positive side (with respect to the orientation defined by $\mathcal{N}$). We conclude {the generalized Flow-Box property in $U$} from this observation.
\end{proof}
\subsection{Proof of Theorem \ref{th:R2simpl}(b)}
\label{ssec:Example}
We present an example of a geometrically quasi-transverse triple $(M,\partial,X)$ with $dim M = 4$ that \textbf{does not} satisfy the generalized Flox-Box condition. We remark that this example is valid for a four dimension real or complex manifold $M$, but it can be generalized to any {higher} dimension. Its construction relies in the following preliminary result:
\begin{lemma}
The equation:
\begin{equation}
(1+s^2)\cos(\alpha)\sin(\alpha) - \alpha = 0
\label{eq:ce}
\end{equation}
has an analytic solution $(\alpha,s) = (h(s),s) = (s U(s),s)$ defined in a neighbourhood of the origin, where $U(0) =\sqrt{6} / 2$.
\label{lem:ce1}
\end{lemma}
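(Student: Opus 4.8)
The plan is to solve equation~\eqref{eq:ce} by the analytic implicit function theorem, after first locating the correct branch at $s=0$. Setting $s=0$ reduces the equation to $\cos(\alpha)\sin(\alpha)-\alpha=\tfrac12\sin(2\alpha)-\alpha=0$, whose only real solution near the origin is $\alpha=0$ (since $\tfrac12\sin(2\alpha)-\alpha = -\tfrac{2}{3}\alpha^3+O(\alpha^5)$, it vanishes to order three at $0$). So the curve of solutions passes through $(\alpha,s)=(0,0)$, but the partial derivative of $F(\alpha,s):=(1+s^2)\cos\alpha\sin\alpha-\alpha$ with respect to $\alpha$ at the origin is $(1+s^2)\cos(2\alpha)-1\big|_{(0,0)}=0$, so the naive implicit function theorem does not apply directly. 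This vanishing of the $\alpha$-derivative is exactly why the solution has the form $\alpha=sU(s)$ rather than being analytic in $s$ alone with nonzero derivative, and it is the main obstacle.

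To get around the degeneracy I would substitute $\alpha = s\beta$ and divide by an appropriate power of $s$. Write $\cos(s\beta)\sin(s\beta) = \tfrac12\sin(2s\beta) = s\beta - \tfrac{2}{3}s^3\beta^3 + O(s^5\beta^5)$. Then
\[
F(s\beta,s) = (1+s^2)\Bigl(s\beta - \tfrac{2}{3}s^3\beta^3 + O(s^5)\Bigr) - s\beta = s^3\beta - \tfrac{2}{3}s^3\beta^3 + O(s^5),
\]
so after dividing by $s^3$ we obtain an analytic function
\[
G(\beta,s) := \frac{F(s\beta,s)}{s^3} = \beta - \tfrac{2}{3}\beta^3 + s^2(\cdots),
\]
where the ellipsis is an analytic function of $(\beta,s)$; in particular $G$ extends analytically across $s=0$ by its power series. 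The key point is that $G(\beta,0) = \beta\bigl(1-\tfrac23\beta^2\bigr)+\tfrac{?}{}$ — one must be a little careful collecting the constant-in-$s$ part — and solving $G(\beta,0)=0$ for the branch with $\beta\neq 0$ gives $\beta^2 = 3/2$, i.e. $\beta = \pm\sqrt{3/2} = \pm\sqrt{6}/2$. (The factor of $(1+s^2)$ multiplies the whole sine expansion, so the $s$-independent part of $G$ is $\beta-\tfrac23\beta^3$ coming from the leading $s^3\beta - \tfrac23 s^3\beta^3$ term; any cross term with the $s^2$ from $(1+s^2)$ produces the $s^2\beta$ correction, which does not affect $G(\cdot,0)$.)

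Finally I would apply the analytic implicit function theorem to $G$ at the point $(\beta,s)=(\sqrt{6}/2,\,0)$: we have $G(\sqrt{6}/2,0)=0$ and $\partial G/\partial\beta\,(\sqrt{6}/2,0) = 1 - 2\beta^2\big|_{\beta=\sqrt6/2} = 1-3 = -2 \neq 0$, so there is a unique analytic function $\beta = U(s)$ with $U(0)=\sqrt{6}/2$ solving $G(U(s),s)=0$ near $s=0$. Unwinding the substitution, $\alpha = h(s) := sU(s)$ is then an analytic solution of~\eqref{eq:ce} with the stated value $U(0)=\sqrt{6}/2$, which completes the proof. The only genuinely delicate step is the bookkeeping in the expansion that shows $F(s\beta,s)$ is divisible by $s^3$ with the right leading coefficients; everything after that is a routine application of the implicit function theorem.
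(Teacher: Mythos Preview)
Your argument is correct, and it differs from the paper's in a useful way. The paper divides \eqref{eq:ce} by $\alpha$, expands to get $s^2 - \tfrac{2}{3}(1+s^2)\alpha^2 + o(\alpha^3)=0$, and then invokes the Weierstrass Preparation Theorem together with the symmetry $\alpha\mapsto-\alpha$ to factor the left-hand side as $(f(s)^2-\alpha^2)u(\alpha,s)$ with $u$ a unit; the relation $f(s)^2u(0,s)=s^2$ then forces $f(s)=\pm sU(s)$, and a Taylor computation gives $U(0)=\sqrt{6}/2$. You instead perform the blow-up substitution $\alpha=s\beta$, check that $F(s\beta,s)$ is divisible by $s^3$ with quotient $G(\beta,s)=\beta-\tfrac{2}{3}\beta^3+s^2(\cdots)$, and apply the analytic implicit function theorem directly at $(\beta,s)=(\sqrt{6}/2,0)$, where $\partial G/\partial\beta=-2\neq 0$. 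Your route is a bit more elementary (no Weierstrass Preparation needed) and isolates one branch cleanly; the paper's factorization has the minor advantage of exhibiting both branches $\alpha=\pm sU(s)$ simultaneously. The hesitant ``$+\tfrac{?}{}$'' in your write-up should be removed: as you then argue, every term of $\tfrac12\sin(2s\beta)$ beyond the first two contributes $s^{2k+1}\beta^{2k+1}$ with $2k+1\geq 5$, and the factor $(1+s^2)$ only adds further powers of $s$, so $G(\beta,0)=\beta-\tfrac{2}{3}\beta^3$ exactly.
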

\begin{proof}
Not{e} that the equation:
\begin{equation}
(1+s^2)cos(\alpha)\frac{sin(\alpha)}{\alpha} - 1 = 0
\label{eq:ce2}
\end{equation}
has the same solutions of equation $(\ref{eq:ce})$ apart from $(\alpha,s) = (0,s)$. Now, taking the Taylor expansion with respect to the variable $\alpha$ of equation $(\ref{eq:ce2})$ at $(\alpha,s) = (0,0)$, we get
\[
s^2 -\frac{2}{3}(1+s^2)\alpha^2 + o(\alpha^3) = 0
\]
so, by the Weierstrass Preparation Theorem and the symmetry of equation $(\ref{eq:ce2})$ with respect to the transform $\alpha \to -\alpha$, the equation $(\ref{eq:ce2})$ can be written as:
\[
( f(s)^2 {-}\alpha^2)u(\alpha,s) = 0
\]
where $u(\alpha,s)$ is an unit and $f(s)$ is an analytic function. Furthermore, we have that $f(s)^2 u(0,s) = s^2$, which implies that $f(s) = \pm s U(s)$, where $U(s)$ is an analytic unit defined in a neighbourhood of zero. Taking $h(s) = s U(s)$ gives the desired result. To finish, we just need to use the Taylor expansion of equation $(\ref{eq:ce})$ in respect to $s$ to conclude that $U(0) = \sqrt{6}/ 2$.
\end{proof}
Now, let $(x,y,z,w)$ be a globally defined coordinate system of $\mathbb{K}^4$ (where $\mathbb{K}=\mathbb{R}$ or $\mathbb{C}$). We consider the triple $(M,\partial,X)$ where the manifold $M$ is a (sufficiently) small open neighbourhood of the origin of $\mathbb{K}^4$ (where $h(x^2+y^2)$ is well-defined); the vector-field $\partial$ is given by:
\[
\partial = y^2 \frac{\partial}{\partial w} + x \frac{\partial}{\partial y} - y\frac{\partial}{\partial x}
\]
and the (non-singular) variety $X$ is given by $z= f(x,y)$ and $w=g(x,y)$ where:
\[
\begin{aligned}
f(x,y) &= y^2 \cos (h(x^2+y^2)) - xy \sin (h(x^2+y^2))\\
g(x,y) &= \frac{1}{2}xy(x^2+y^2)^2
\end{aligned}
\]

Theorem \ref{th:R2simpl}(b) follows from the two Claims below:

\medskip\noindent
\textbf{Claim 1:} In this example, $(M,\partial,X)$ is geometrically quasi-transverse.
\begin{proof}
In order to prove the Claim, we just need to show that the {set $W$ of} points where $\partial$ and $X$ are tangent is contained in $Sing(\partial) \cap X = \{0,0,0,0\}$. Indeed, {$W$} is a variety {defined} by $z= f(x,y)$, $w=g(x,y)$, and
\[
\begin{aligned}
\phantom{2\, }\partial[z - f(x,y)] &={-}2xy \cos (h(x^2+y^2)) + (x^2-y^2) \sin (h(x^2+y^2)) =0\\
2\, \partial[w - g(x,y)] &= 2 y^2 -(x^2-y^2)(x^2+y^2)^2 =0
\end{aligned}
\]
Now, from the forth equation we get two solutions in $y$ (defined in a sufficiently small neighbourhood of the origin):
\[
y_1=x^3 V(x) \quad \text{and} \quad
y_2= - x^3 V(x)
\]
where $V(x)$ is an analytic function such that $V(0)= \frac{1}{2} \sqrt{2}$. Now, substituting in the third equation and taking its Taylor expansion in $x$, we get:
\[
\left[ \pm 2V(0)+U(0)\right]x^4 + O(x^5) = 0
\]
where we recall that $h(s) = s U(s)$. Since $\pm 2V(0)+U(0)\neq 0$, we conclude that $W=\{0,0,0,0\}$ as we wanted to prove.
\end{proof}
\noindent
\textbf{Claim 2:} In this example, $(M,\partial,X)$ does not satisfy the generalized Flow-Box property.
\begin{proof}
We prove this claim when $\mathbb{K}=\mathbb{R}$ since the complex case trivially follows from the real case. Since the variety $\mathcal{C} = \{x=0,y=0\}$ is invariant by $\partial$, we consider the (real) blowing-up $\sigma: \widetilde{M} \to M$ with center $\mathcal{C}$ and exceptional divisor $E$, where $\widetilde{M} =\mathbb{R}^{+} \times S^1 \times \mathbb{R}^2$ and there exists a globally defined coordinate system $(r,\alpha,z,w)$ on $\widetilde{M}$ where $\sigma$ is given by:
\[
x= r \cos(\alpha), \quad y=r \sin(\alpha).
\]
The pulled-back vector-field $\widetilde{\partial}$ is equal to:
\[
\widetilde{\partial} = r^2 \sin(\alpha)^2 \frac{\partial}{\partial w} + \frac{\partial}{\partial \alpha}
\]
and the pulled-back variety $\widetilde{X}$ is {defined} by $z = \tilde{f}(r,\alpha)$ and $w = \tilde{g}(r,\alpha))$, where:
\[
\begin{aligned}
\tilde{f}(r,\alpha) &= r^2 \sin(\alpha) [ \sin(\alpha) \cos (h(r^2)) -  \cos(\alpha) \sin (h(r^2)) ]\\
\tilde{g}(r,\alpha) &= \frac{1}{2} r^6\sin(\alpha)\cos(\alpha)
\end{aligned}
\]
For a fixed $r_0>0$, the orbit of $\widetilde{\partial}$ passing by $(r,\alpha,z,w) = (r_0,0,0,0)$ at $t=0$ is given by:
\[
\varphi(r_0,t)= \left(r_0,t,0,\frac{r_0^2}{4} [ 2t - \sin(2t)]\right)
\]
\begin{figure}
  \centering
\includegraphics[scale=0.7]{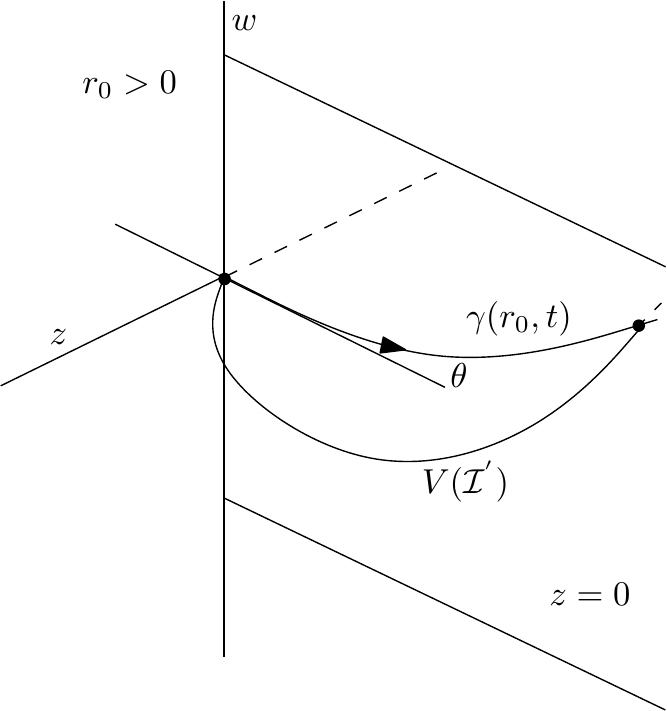}
\caption{\small{In the picture $r_0>0$ is fixed. The orbit $\varphi(r_0,t)$ is contained in the plane $\{z=0\}$ and the variety $\widetilde{X}$ is a curve that cuts the orbit $\varphi(r_0,t)$ two times.}}
\label{fig:CE}
\end{figure}
\noindent
Now, at one hand:
\[
\begin{aligned}
\left[z - \tilde{f}(r,\alpha)\right] (r_0,0,0,0) &= r_0^2 \sin(0) [ \sin(0 - h(r_0^2)) ] = 0\\
\left[w - \tilde{g}(r,\alpha)\right] (r_0,0,0,0) &= \frac{1}{2}r_0^6\sin(0)\cos(0) = 0
\end{aligned}
\]
and $(r_0,0,0,0) \in \widetilde{X}$. At the other hand:
\[
\begin{aligned}
\left[z - \tilde{f}(r,\alpha)\right] (\varphi(r_0,h(r^2_0))) &= r_0^2 \sin(h(r^2_0)) [ \sin(h(r^2_0) - h(r^2)) ] = 0 \\
2\left[w - \tilde{g}(r,\alpha)\right](\varphi(r_0,h(r^2_0))) &=  \frac{r_0^2}{2} \left[ 2 h(r_0^2) - \sin(2 h(r_0^2))\right] - r_0^6\sin(h(r^2_0))\cos(h(r^2_0))
\end{aligned}
\]
and, re-ordering the terms of the second equation, we get:
\[
r_0^2\left[h(r_0^2) - \sin( h(r_0^2)) \cos( h(r_0^2)) (1+r_0^4)\right]
\]
and, since ${h}(s)$ is a solution of equation \eqref{eq:ce}, we conclude that $\left[w - \tilde{g}(r,\alpha)\right]\allowbreak (\varphi(r_0,h(r^2_0))) =0$, which implies that $\varphi(r_0,h(r^2_0)) \in \widetilde{X}$.

So, for each $r_0>0$ fixed, there exists an orbit of $\widetilde{\partial}$ that cuts $\widetilde{X}$ two times. Furthermore, the time between each cut is equal to $h(r_0^2)$, which goes to zero when $r_0$ goes to zero. Now, the center of blowing-up $\mathcal{C}$ was invariant by the vector field $\partial$ (thus, the time of the orbits is not affected) and $\sigma$ is an isomorphism outside of the exceptional divisor, we conclude that $(M,\partial,X)$ does not satisfy the generalized Flow-Box property.
\end{proof}
\section{Generalized Flow-Box property for singular foliations}\label{sec:AnyDim}
\subsection{Background on sub-Riemannian Geometry}
\label{ssec:SRG}
We follow \cite{Bela}. Consider $\mathbb{K}=\mathbb{R}$, a regular analytic manifold $W$ and globally defined analytic vector fields $\{\partial_1,...,\partial_m\}$. For each point $p$ in $W$, we denote by $L_p$ the subspace of $T_pW$ generated by $\{\partial_1(p),\ldots,\partial_m(p)\}$. Given any vector $v$ of $L_p$, there always exists $(u_1,...,u_m) \in \mathbb{K}$ (not necessarily unique) such that $ v= \sum^m_{i=1} u_i \partial_i(p)$. So, for each point $p$ of $W$, consider the mapping:
\[
\begin{array}{cccc}
\Phi_p: & \mathbb{R}^m & \longrightarrow & T_pW \\
 & (u_1,...,u_m)& \mapsto & \sum^m_{i=1} u_i \partial_i(p)
\end{array}
\]
{Note} that $\Phi_p$ restricted to the linear subspace $(ker \Phi_p)^{\bot}$ is a linear isomorphism onto $L_p$. Let $\Psi_p: L_p \to (ker \Phi_p)^{\bot}$ be the inverse mapping. Then, if $v$ and $w$ are vectors contained in $L_p$, we define the \textit{sub-Riemannian metric} $g_p(v,w)$ associated to $\{\partial_1,...,\partial_m\}$ by:
\[
g_p(v,w) := \left<\Psi_p(v),\Psi_p(w)\right> 
\]
Based on this metric, we define the notion of \textit{sub-Riemannian norms} $\|.\|_{{g},p} = |g_p(\cdot,\cdot)|$ and $\|.\|_{{g},p,\infty} = \max_{i}\{|u_i|\}$ (where $u = \Psi_p(\cdot)$) {generated by} $\{\partial_1,\ldots,\partial_m\}$. We extend both norms to every vector $v$ of $T_pW$ by setting $\|v\|_{g,p} =\|v\|_{g,p,\infty}= \infty$ if $v$ is not contained in $L_p$. {Finally, given two point $p$ and $q$, we defined their sub-Riemannian distance by}
\[
{d_g(p,q) = \min\left\{\int_0^1\|\dot{\gamma}(t)\|_{g,\gamma(t)}dt;\, \gamma \text{ abs. continuous curve s.t. }\gamma(0)=p,\,\gamma(1)=q\right\}}
\]

\begin{remark}
The above definitions can be extended to the complex case in a standard way: if $\{\partial_1,...,\partial_m\}$ are globally defined holomorphic vector fields, let $\{\tilde{\partial}_1, \ldots, \tilde{\partial}_{2m}\}$ be globally defined real-analytic vector fields such that $\{\tilde{\partial}_i,\tilde{\partial}_{m+i}\}$ generates $\partial_i$. We then extend the definitions of sub-Riemmanian metric and norm by using $\{\tilde{\partial}_1, \ldots, \tilde{\partial}_{2m}\}$. 
\end{remark}
Finally, an analytic \textit{sub-Riemannian metric} on $M$ is a function $g: TM \otimes TM \longrightarrow \mathbb{R} \cup \{\infty\}$ which, locally may be defined as the metric associated to some system of analytic vector fields. In other words, for each point $p$ in $M$, there exists a neighbourhood $U_p$ of $p$ and a set of vector fields $\{\partial_1,...,\partial_m\}$ defined over $U_p$ such that $g|_{U_p} = g_{\{\partial_1,...,\partial_m\}}$ where $g_{\{\partial_1,...,\partial_m\}}$ is the sub-Riemmanin metric on $U_p$ {generated by} $\{\partial_1,...,\partial_m\}$.

\subsection{Geometric quasi-transversality}

\begin{definition}\label{def:GQT}
A foliated ideal sheaf $(M,\theta,\mathcal{I},E)$ is said to be \emph{geometrically quasi-transverse} if for {any} point $p$ in $ X \setminus E$ {we have}
\[
dim_{\mathbb{K}} L_p + dim_{\mathbb{K}}T_p X = dim_{\mathbb{K}} ( L_p+ T_p X ) 
\]
where $L_p$ is the linear sub-space of $T_pM$ generated by $\theta$ {and $T_pX$ is the Zariski tangent space of $X$ at $p$}.
\end{definition}

\begin{proposition}
Let $(M,\theta,\mathcal{I},E)$ be a geometrically quasi-transverse foliated ideal sheaf and suppose that $\theta[\mathcal{I}] = \mathcal{O}_M$. Then, at each point $p\in V(\mathcal{I})$ there exists two involutive singular distributions $\theta_Y$ and $\theta_Z$ (defined in some {open neighborhood} $U$ of {$p$}), where: $\theta_Y$ is generated by a vector-field $\partial_Y$, and $\theta_Z$ is generated by vector-fields $\{\partial_{Z_1}, \ldots, \partial_{Z_r}\}$ {and has leaf dimension strictly smaller than $\theta$}, such that: i) $\theta = \theta_Y+\theta_Z$; ii) $\partial_Y(\mathcal{I}\cdot \mathcal{O}_p) = \mathcal{O}_p$; iii) $[\partial_Y,\partial_{Z_i}] \equiv 0$ for all $i\leq r$.
\label{prop:LT} 
\end{proposition}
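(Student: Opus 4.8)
The plan is to straighten the ``transverse direction'' provided by the hypothesis $\theta[\mathcal{I}]=\mathcal{O}_M$ by a flow-box change of coordinates, and then to \emph{comb} the singular foliation $\theta$ along that direction using involutivity; the decomposition $\theta=\theta_Y+\theta_Z$ then reads off from a well-chosen set of local generators. First I would produce $\partial_Y$. Fix $p\in V(\mathcal{I})$. Since $\theta[\mathcal{I}]\cdot\mathcal{O}_p=\mathcal{O}_p$ while $\mathcal{I}\cdot\mathcal{O}_p\subset\mathfrak{m}_p$, an identity $\sum_i g_i\,\partial_i(f_i)=1$ holds with the $\partial_i$ local sections of $\theta$ and $f_i\in\mathcal{I}\cdot\mathcal{O}_p$; evaluating at $p$ forces $\partial_i(f_i)(p)\neq 0$ for some $i$, so there is a local section $\partial_Y$ of $\theta$ with $\partial_Y(\mathcal{I}\cdot\mathcal{O}_p)=\mathcal{O}_p$ -- which is exactly (ii) -- and in particular $\partial_Y(p)\neq 0$. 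As $\partial_Y$ is tangent to the SNC divisor $E$, the Flow-Box Theorem taken relative to $E$ gives a coordinate system $(x,\pmb{y})=(x,y_2,\dots,y_m)$ on a neighbourhood $U$ of $p$ in which $\partial_Y=\partial_x$ (and $E\cap U=\{\prod_{j\in J}y_j=0\}$ for some $J\subset\{2,\dots,m\}$).

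The heart of the argument is the claim: \emph{after shrinking $U$, the module $\theta\cdot\mathcal{O}_U$ is generated over $\mathcal{O}_U$ by $\partial_x$ together with finitely many vector fields whose coefficients do not depend on $x$.} To prove it I would set $M=\theta\cdot\mathcal{O}_p$ and $D=[\partial_x,\,\cdot\,]$, which in the chart $(x,\pmb{y})$ is just $\partial/\partial x$ applied to the coefficients; since $\partial_x\in\theta$ and $\theta$ is closed under the Lie bracket, $M$ is $D$-stable. The key sub-step is to show that $\eta|_{\{x=0\}}\in M$ for every $\eta\in M$, where $\eta|_{\{x=0\}}$ denotes the $x$-independent field agreeing with $\eta$ on $\{x=0\}$. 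This is where flow-invariance enters: writing $\phi_t(x,\pmb{y})=(x+t,\pmb{y})$ for the flow of $\partial_x$, the series $(\phi_t)^{\ast}\eta=\sum_{k\ge 0}\tfrac{t^k}{k!}D^k\eta$ has all coefficients $D^k\eta$ in $\theta(U)$ and converges, uniformly for $t$ in a compact subinterval of its disc of convergence, in the natural Fr\'echet topology on $\mathrm{Der}_M(U)$; since the sections over $U$ of the coherent subsheaf $\theta$ form a \emph{closed} $\mathcal{O}(U)$-submodule of $\mathrm{Der}_M(U)$ (a standard property of coherent analytic sheaves -- complexify in the real-analytic case), evaluating this series at $t=-x$ exhibits $\eta|_{\{x=0\}}=\sum_k\tfrac{(-x)^k}{k!}D^k\eta$ as a section of $\theta$ over a slightly smaller neighbourhood. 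Granting this, each $x$-Taylor coefficient of $\eta$, namely $\tfrac{1}{k!}(D^k\eta)|_{\{x=0\}}$, is an $x$-independent section of $\theta$; as $\mathcal{O}_p$ is Noetherian the $x$-independent sections of $\theta$ form a finitely generated $\mathbb{K}\{\pmb{y}\}$-module $M^{\mathrm{inv}}$, and Krull's intersection theorem lets me resum the Taylor expansion $\eta=\sum_k x^k\,\bigl(\tfrac1{k!}(D^k\eta)|_{\{x=0\}}\bigr)$ inside the $\mathcal{O}_p$-span of $M^{\mathrm{inv}}$, proving the claim. Finally I would throw $\partial_x$ into the generating set and subtract from each remaining generator its $\partial_x$-component (which keeps it in $\theta$ and its coefficients $x$-independent), obtaining generators $\partial_Y=\partial_x,\partial_{Z_1},\dots,\partial_{Z_r}$ of $\theta$ on $U$ with every $\partial_{Z_i}$ having $x$-independent coefficients and no $\partial_x$-component.

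It then remains to verify the conclusions with $\theta_Y:=\mathcal{O}_U\cdot\partial_Y$ and $\theta_Z:=\mathcal{O}_U\cdot\langle\partial_{Z_1},\dots,\partial_{Z_r}\rangle$. Item (i) is the choice of generators; $\theta_Y$ is a line foliation, hence trivially involutive; (ii) was obtained above; (iii) holds because $[\partial_Y,\partial_{Z_i}]=[\partial_x,\partial_{Z_i}]=0$ when $\partial_{Z_i}$ has $x$-independent coefficients and no $\partial_x$-term; and $\theta_Z$ has leaf dimension strictly smaller than $\theta$ because at every $q\in U$ we have $\partial_{Z_i}(q)\in\mathrm{span}_{\mathbb{K}}(\partial_{y_2}(q),\dots,\partial_{y_m}(q))$ while $\partial_x(q)$ is not in this span, so $L_q=\mathbb{K}\,\partial_x(q)\oplus\langle\partial_{Z_1}(q),\dots,\partial_{Z_r}(q)\rangle$ and the rank of $\theta_Z$ is exactly one less than that of $\theta$ at each point. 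For involutivity of $\theta_Z$: $[\partial_{Z_i},\partial_{Z_j}]$ is a section of $\theta$ (as $\theta$ is involutive) with no $\partial_x$-component, so expressing it in the generators $\partial_x,\partial_{Z_l}$ and comparing $\partial_x$-components shows it already lies in $\theta_Z$.

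The step I expect to be the main obstacle is the flow-invariance used in the second paragraph -- the passage from ``$\theta$ is closed under $[\partial_x,\,\cdot\,]$'' to ``$\theta$ is locally generated by $\partial_x$-invariant sections''. Everything surrounding it is either a direct computation or a standard fact (flow-box relative to $E$, Noetherianity, Krull's theorem, comparison of components in a fixed chart), so the write-up should hinge on stating cleanly that the sections of a coherent subsheaf form a closed submodule in the Fr\'echet topology and that the Taylor series above may be legitimately resummed within it.
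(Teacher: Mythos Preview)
Your approach is correct and reaches the same destination, but by a considerably heavier route than the paper's. Both arguments begin identically: produce $\partial_Y\in\theta$ with $\partial_Y(\mathcal I\cdot\mathcal O_p)=\mathcal O_p$, straighten to $\partial_Y=\partial_y$, and pick generators $Y=\partial_y,W_1,\dots,W_s$ of $\theta$ with the $W_i$ having no $\partial_y$-component. The divergence is in how one replaces the $W_i$ by $Y$-commuting generators. The paper observes that involutivity forces $[Y,W_j]=\sum_iA_{ij}W_i$ (the $\partial_y$-component vanishes on both sides), and then simply solves the \emph{linear ODE} $Y(\mu_j)+\sum_i\mu_iA_{ij}=0$ with initial condition $\mu(0,z)=\mathrm{Id}$; the resulting $Z_i=\sum_j\mu_{ij}W_j$ satisfy $[Y,Z_i]=0$, and since the transition matrix $\mu$ is invertible near $p$ the $Z_i$ together with $Y$ still generate $\theta$. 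This is a one-line ODE argument and is exactly your ``combing'' made explicit.

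Your version encodes the same flow, but packaged abstractly: you argue that $\eta\mapsto\sum_k\frac{(-x)^k}{k!}[\partial_x,\cdot]^k\eta$ lands in $\theta$ by closedness of the sections of a coherent subsheaf in the Fr\'echet topology, then invoke Noetherianity and an Artin--Rees/Krull argument to show that the invariant sections generate. This is sound, but note that the resummation step you label ``Krull'' really needs Artin--Rees first (to pass from $R_n\in x^n\mathrm{Der}_p\cap\theta_p$ to $R_n\in x^{n-c}\theta_p$), and the finite generation of $M^{\mathrm{inv}}$ over $\mathbb K\{\pmb y\}$ is not immediate from Noetherianity of $\mathcal O_p$ alone --- it comes from identifying $M^{\mathrm{inv}}$ with the stalk of the coherent pullback $i^*\theta$ to $\{x=0\}$. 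The paper's ODE bypasses all of this: the fundamental matrix $\mu$ is automatically analytic and invertible, so generation, invariance, and involutivity of $\theta_Z$ fall out at once without any topological or commutative-algebra machinery. What your approach buys is a coordinate-free statement (``a coherent involutive subsheaf containing a nonvanishing section is locally generated by flow-invariant sections''); what the paper's buys is brevity and elementarity.
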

\begin{proof}
In what follows, we use a well-known argument based on a linear ODE; we learned about this argument through \cite{Mit} (where $\theta$ is assumed to be {torsion free}). 

Fix a point $p\in M$ a let $(y,z) = (y,z_{1},...,z_{n-1})$ be a coordinate system centered at $p$ and defined in a neighbourhood $U$ of $p$, such that the vector field $Y = \partial_y$ is contained in $\theta \cdot \mathcal{O}_U$ and the analytic function $y$ is contained in the ideal $\mathcal{I} \cdot \mathcal{O}_U$. Let $\{W_1,...,W_s\}$ be vector fields such that $\{Y, W_1,...,W_s\}$ generate $\theta\cdot \mathcal{O}_U$. We can suppose that $W_i(x) \equiv 0$, which implies that:
\[
[Y,W_j] = \sum^s_{j=1} A_{i,j}(y,z) W_j
\]
Now, consider a vector field of the form $W=\sum^s_{i=1} \mu_i W_i$, where $\mu_i \in \mathcal{O}_U$. Then:
\[
[Y,W]= \sum^s_{j=1} \left[ Y(\mu_j) +  \sum^s_{i=1} \mu_i A_{i,j}(y,z) \right]\, W_j 
\]
Since $Y$ is a regular vector field, the system of equations:
\[
Y(\mu_j) +  \sum^s_{i=1} \mu_i A_{i,j}(y,z) = 0, \quad j=1,\ldots,s
\]
is an analytic system of ODE's. Therefore, up to shrinking $U$, there exists $s$ locally defined analytic solutions $\vec{\mu}_i = (\mu_{i,1},...,\mu_{i,s})$ such that $\vec{\mu}_i(0) = e_i = (0,...,0,1,0,...,0)$ (where the $1$ is on the $i$ position). Without loss of generality, we suppose that these solutions are defined on $U$ and we denote by $Z_i := \sum^s_{j=1} \mu_{i,j} W_j$ the vector field associated to it. Note that:
\begin{itemize}
\item[$\bullet$] The vector fields $\{Y,Z_1,...,Z_s\}$ generates $\theta\cdot \mathcal{O}_U$;
\item[$\bullet$] The Lie bracket $[Y,Z_i] \equiv 0$ for $i =1,\ldots, s$;
\item[$\bullet$] The vector fields $\{Z_1,...,Z_s\}$ generate an involutive $d-1$-singular distribution $\theta_Z$ (because of the two points above).
\end{itemize}
\end{proof}

\begin{lemma}\label{lem:GQTcategory}
Let $(M,\theta,\mathcal{I},E)$ be a geometrically quasi-transverse foliated ideal sheaf and consider a foliated ideal sheaf $(\widetilde{M},\widetilde{\theta},\widetilde{\mathcal{I}}, \widetilde{E})$ defined by one of the following three operations:
\begin{itemize}
\item A $\theta$-invariant blowing-up of order one, that is, $\sigma: (\widetilde{M},\widetilde{\theta},\widetilde{\mathcal{I}}, \widetilde{E}) \to (M,\theta,\mathcal{I},E)$ is a $\theta$-invariant blowing-up of order one;
\item An open restriction, that is, given an open set $U\subset M$, we consider $(\widetilde{M},\widetilde{\theta},\widetilde{\mathcal{I}}, \widetilde{E}) = (U,\theta|_U,\mathcal{I}|_U,E|_U)$.
\item A leaf-reduction operation, that is, suppose that $\theta[\mathcal{I}] = \mathcal{O}_M$. In the notation of Proposition \ref{prop:LT}, we consider $(\widetilde{M},\widetilde{\theta},\widetilde{\mathcal{I}}, \widetilde{E}) = (U,\theta_Z,\mathcal{I}|_U,E_U)$.
\end{itemize}
Then $(\widetilde{M},\widetilde{\theta},\widetilde{\mathcal{I}}, \widetilde{E})$ is also geometrically quasi-transverse.
\end{lemma}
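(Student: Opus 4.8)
The plan is to treat each of the three operations separately, since in each case one must verify the transversality condition $\dim_{\IK} L_p + \dim_{\IK} T_p X = \dim_{\IK}(L_p + T_p X)$ at every point $p$ of the new subvariety outside the new divisor. For the \textbf{open restriction} this is immediate: the objects $\theta$, $\cI$ and $E$ are simply restricted, so $L_p$, $T_pX$ and membership in $X\setminus E$ are unchanged for every $p \in U$; there is nothing to prove. So the two genuine cases are the blowing-up and the leaf-reduction operation.

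For the \textbf{$\theta$-invariant blowing-up of order one} $\sigma\colon \tilde M \to M$, I would argue at a point $q \in \tilde X \setminus \tilde E$. Since $\sigma$ is an isomorphism over $M \setminus \cC$ and the center $\cC$ is $\theta$-invariant, for any $q$ not on the exceptional divisor $F$ the map $d\sigma_q$ identifies $(\tilde M, \tilde\theta, \tilde X)$ near $q$ with $(M,\theta,X)$ near $\sigma(q)$; moreover $q \notin \tilde E$ forces $\sigma(q) \notin E$ (because $\tilde E = \sigma^{-1}(E) \cup F$). Hence $L_q \cong L_{\sigma(q)}$ and $T_q\tilde X \cong T_{\sigma(q)} X$ compatibly with their ambient inclusions, and geometric quasi-transversality at $\sigma(q)$ transports to $q$. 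The one subtlety worth spelling out is that one needs $q$ to lie off $\tilde E$ rather than merely off $F$, which is exactly what the definition of geometric quasi-transversality (Definition \ref{def:GQT}) demands — so points of $\tilde X$ lying on $\sigma^{-1}(E)$ or on $F$ impose no condition, and there is no issue there. The only place care is needed is to check that $\tilde X = V(\tilde\cI)$ and $X = V(\cI)$ correspond under $\sigma$ away from $\cC$, which follows from the definition of the weak transform together with $\cC \subset X$ (order one).

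For the \textbf{leaf-reduction operation} I would fix $p$ with $\theta[\cI] = \cO_M$ and use Proposition \ref{prop:LT} to write $\theta = \theta_Y + \theta_Z$ on a neighbourhood $U$, with $\partial_Y(\cI\cdot\cO_p) = \cO_p$. I take $q \in U \cap X \setminus E_U$ and must compare $L^Z_q := \Span\{\partial_{Z_1}(q),\ldots,\partial_{Z_r}(q)\}$ against $T_qX$. The containment $L^Z_q \subset L_q$ gives $L^Z_q \cap T_qX \subset L_q \cap T_qX = \{0\}$ by geometric quasi-transversality of $(M,\theta,\cI,E)$ at $q$, so the intersection is trivial; this is precisely the condition $\dim L^Z_q + \dim T_qX = \dim(L^Z_q + T_qX)$. (Since $\IK$ is a field, trivial intersection of two subspaces of $T_qM$ is equivalent to the dimension-additivity displayed in Definition \ref{def:GQT}.) So $(U,\theta_Z,\cI|_U,E_U)$ is geometrically quasi-transverse.

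The main obstacle is essentially bookkeeping rather than a deep difficulty: one must make sure the subvariety, the tangent spaces, and the ``outside $E$'' condition all transport correctly under each operation, and in particular that passing to the blowing-up does not create new points of $\tilde X$ off $\tilde E$ where transversality could fail — which it cannot, since such points come from points of $X$ off $E$ via an isomorphism. The leaf-reduction case is the cleanest, as it only uses the trivial observation that a subspace of a transverse subspace is transverse.
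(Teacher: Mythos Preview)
Your proposal is correct and follows exactly the approach the paper uses: the paper's own proof is the single sentence ``follows directly from the definition of quasi-transversality and the fact that a blowing-up is an isomorphism outside its center,'' and you have simply unpacked that sentence carefully for each of the three operations. Your additional remarks (that $q\notin\widetilde E$ forces $q\notin F$ and $\sigma(q)\notin E$, and that $L^Z_q\subset L_q$ gives trivial intersection with $T_qX$) are the right details behind that one line.
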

\begin{proof}
The proof follows directly from the definition of quasi-transversality and the fact that a blowing-up is an isomorphism outside its center.
\end{proof}

\begin{remark}
In what follows, we consider the category, which we denote by $GQT$, whose objects are geometrically quasi-transverse foliated ideal sheaves and the morphisms are (composition of) $\theta$-invariant blowings-up of order one, restrictions and leaf-reduction operation (see Lemma \ref{lem:GQTcategory}).
\label{rk:GQT}
\end{remark}
%

\subsection{The generalized Flow-Box property}
\begin{definition}
We say that a foliated ideal sheaf $(M,\theta,\mathcal{I},E)$ satisfies the \emph{generalized Flow-Box property at a point} {$p \in X = V(\mathcal{I})$}, if there exists a triple $(U,\delta,g)$, where $U$ is an open neighbourhood of $p$, $\delta>0$ is a real number and $g$ is a sub-Riemannian metric over $U$ generated by a set $\{\partial_1, \ldots, \partial_d\}$ of local generators of $\theta$ such that: for every point $q \in U \cap (X \setminus  E)$, every $g$-ball of radius $\eta<\delta$
\[
B_{\eta}^g(q) = \{a \in U ; d_g(a,q) <\eta\}
\]
intercepts $X$ only at $q$ and is homeomorphic to a $k_q$-euclidean ball, where $k_q$ is the dimension of the leaf of $\theta$ passing through $q$. {Finally, we say that  $(M,\theta,\mathcal{I},E)$. satisfies the generalized Flow-Box property if it is so at every point $p\in X$.}
\end{definition}
\begin{lemma}
The {generalized} Flox-Box property is independent of the choice of local generators $\{\partial_1, \ldots, \partial_d\}$ of $\theta\cdot\mathcal{O}_p$ (and, thus, it is independent of the local sub-Riemannian metric).
\label{lem:G-FB1}
\end{lemma}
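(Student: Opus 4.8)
The statement is that the generalized Flow-Box property at a point $p$ does not depend on which local generators $\{\partial_1,\ldots,\partial_d\}$ of $\theta\cdot\mathcal{O}_p$ we use to build the sub-Riemannian metric. The plan is to show that any two such choices produce sub-Riemannian metrics that are \emph{bi-Lipschitz equivalent} on a (possibly smaller) neighbourhood of $p$, and that bi-Lipschitz equivalence of the metric preserves the defining conditions of the property. First I would fix two generating families $\{\partial_1,\ldots,\partial_d\}$ and $\{\partial_1',\ldots,\partial_{d'}'\}$ of the $\mathcal{O}_p$-module $\theta\cdot\mathcal{O}_p$. By coherence, each $\partial_i'$ is an $\mathcal{O}$-linear combination $\partial_i' = \sum_j a_{ij}\partial_j$ with analytic coefficients $a_{ij}$ defined near $p$, and symmetrically $\partial_j = \sum_i b_{ji}\partial_i'$; shrinking $U$, all $a_{ij},b_{ji}$ are bounded. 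From these transition matrices one gets, at every point $q\in U$, that the subspace $L_q\subset T_qM$ generated by the two families coincides (so $k_q$ is intrinsic, as it should be), and that the two norms $\|\cdot\|_{g,q}$ and $\|\cdot\|_{g',q}$ on $L_q$ are comparable up to a constant $C$ independent of $q$: indeed if $v=\sum u_j\partial_j(q)$ with the $u_j$ realizing the minimal-norm representation, then $v=\sum_i(\sum_j b_{ji}u_j)\partial_i'(q)$ gives an admissible (not necessarily minimal) representation for the primed system, bounding $\|v\|_{g',q}$ from above; the reverse inequality is symmetric.

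\textbf{Key steps.} Step one: establish the pointwise norm comparison $C^{-1}\|v\|_{g,q}\le \|v\|_{g',q}\le C\|v\|_{g,q}$ for all $q$ in a neighbourhood of $p$ and all $v\in T_qM$ (with both sides $=\infty$ simultaneously when $v\notin L_q$), using the transition matrices and the variational characterization of the sub-Riemannian norm via $(\ker\Phi_q)^\perp$. Step two: integrate this along absolutely continuous admissible curves to deduce $C^{-1}d_g(a,b)\le d_{g'}(a,b)\le C\, d_g(a,b)$ for $a,b$ in a neighbourhood of $p$; hence the two balls satisfy nested inclusions $B^{g'}_{\eta/C}(q)\subset B^g_\eta(q)\subset B^{g'}_{C\eta}(q)$. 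Step three: transfer the two conditions in the definition. For the ``intersects $X$ only at $q$'' condition: if it holds for $g$ with threshold $\delta$, then for $g'$ take threshold $\delta'=\delta/C$; any $g'$-ball of radius $\eta'<\delta'$ sits inside a $g$-ball of radius $C\eta'<\delta$, which meets $X$ only at $q$. For the ``homeomorphic to a $k_q$-ball'' condition: here I would use that, after the reductions of the paper, one is in a setting where the metric balls are well understood (in particular after applying Proposition \ref{prop:HSimplificado} the relevant $\theta$ is regular off $\mathrm{Sing}(\theta)\cup E$), and the nested inclusion $B^{g'}_{\eta/C}(q)\subset B^g_\eta(q)\subset B^{g'}_{C\eta}(q)$ of homeomorphic-to-a-ball sets, together with local connectedness, forces $B^{g'}_{\eta'}(q)$ to be homeomorphic to a $k_q$-ball for small $\eta'$ as well; alternatively one invokes the openness of the ball maps and that in this regular situation the exponential-type chart identifies small $g$- and $g'$-balls with genuine euclidean balls up to homeomorphism.

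\textbf{Main obstacle.} The norm/distance comparison (Steps one and two) is routine linear algebra plus an integration argument. The delicate point is Step three's second half: bi-Lipschitz maps do \emph{not} in general send topological balls to topological balls, so ``homeomorphic to a $k_q$-euclidean ball'' is not automatically preserved by a mere change of comparable metric. I expect the honest argument to exploit more than bi-Lipschitz equivalence — namely that changing generators conjugates the local flow/holonomy data of $\theta$ by analytic isomorphisms, so small $g'$-balls are actual \emph{analytic} (in fact $\mathcal{C}^\infty$, by the ODE construction underlying $\theta$) reparametrizations of small $g$-balls, hence homeomorphic images of them; this is where I would spend the care. Once the homeomorphism type is seen to be preserved, combining with the inclusion chain of Step two closes the proof, and the statement that $k_q=\dim L_q$ is independent of the generators follows already from Step one.
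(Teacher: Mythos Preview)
Your proposal is correct and follows essentially the same route as the paper: use the transition matrices between two generating families to obtain a uniform pointwise norm comparison $c\,\|v\|_{g,q}\le\|v\|_{g',q}\le C\,\|v\|_{g,q}$, hence bi-Lipschitz equivalence of the sub-Riemannian distances and nested ball inclusions, from which the ``$B\cap X=\{q\}$'' condition transfers with $\delta'=\delta/C$. On the point you flag as the main obstacle (preservation of ``homeomorphic to a $k_q$-ball''), the paper is in fact no more detailed than you are---it simply asserts that ``since the metrics are equivalent, up to shrinking the radius $\eta$, the balls $B^{\widetilde{g}}_{\eta}(q)$ are also homeomorphic to Euclidean balls''---so your worry is legitimate, but your suggested detours through Proposition~\ref{prop:HSimplificado} or analytic flow-reparametrizations go beyond what the paper actually supplies.
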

\begin{proof}
Suppose that $(M,\theta,\mathcal{I},E)$ satisfies the {generalized} Flox-Box property at a point $p$ with the sub-Riemannian metric $g$ generated by $\{\partial_1, \ldots, \partial_r\}$ and let $\tilde{g}$ be a sub-Riemmanian metric generated by another set $\{\tilde{\partial}_1, \ldots, \tilde{\partial}_s\}$ of generators of $\theta \cdot \mathcal{O}_p$. Since these are two sets of generators of $\theta \cdot\mathcal{O}_p$, there exists a matrix $A = [a_{i,j}]_{r \times s}$ of analytic functions such that $(\partial_1, \ldots, \partial_r) = A (\tilde{\partial}_1, \ldots, \tilde{\partial}_s)$. Let $\|A\|_p$  be the $\infty$-norm of the matrix $A$ at $p$, i.e. $\|A\|_p= max\{\|a_{i,j}(p)\| \}$. Now, given a vector $\vec{v}$ in $L_p$, we can write it as $\vec{v} = \sum_{i=1}^r u_i \partial_i(p)$ which implies that:
\[
\vec{v} = \sum_i^{r} u_i \sum^s_{j=1} a_{i,j} \tilde{\partial}_j(p) = \sum^s_{j=1} \tilde{\partial}_j(p) \left[\sum^r_{i=1}a_{i,j} u_i\right].
\]
This implies that $\max\{ \|\sum^r_{i=1}a_{i,j} u_i\|\} \leq \|A\|_p \max\{\|u_i\|\}$. Thus: 
\[
\| \vec{v} \|_{\tilde{g},p,\infty} \leq \|A\|_p \| \vec{v} \|_{g,p,\infty}.
\]
So, {up to} taking a smaller open set $U_p$, there exists ${C>0}$ such that $\|\vec{v}\|_{\tilde{g},q} \leq {C}\|\vec{v}\|_{g,q}$ for every $\vec{v}$ in $L_q$ {and every} $q\in U_p$. Furthermore, by the symmetric argument, we conclude that there exists ${c>0}$ such that:
\[
{c}\|\vec{v}\|_{g,q} \leq  \|\vec{v}\|_{\tilde{g},q} \leq {C}\|\vec{v}\|_{g,q}
\]
{Finally, since the metrics are equivalent, up to shrinking the radios $\eta$, the balls $B^{\widetilde{g}}_{\eta}(q)$ are also homeomorphic to an Euclidean balls for every $q\in U_p \cap ( X\setminus E)$}, which {proves the} Lemma.
\end{proof}

\begin{lemma}
[Blowing-up reduction] Let $\sigma: (\widetilde{M},\widetilde{\theta},\widetilde{\mathcal{I}},\widetilde{E}) \to (M,\theta,\mathcal{I},E)$ be a $\theta$-invariant blowing-up of order one and suppose that $(M,\theta,\mathcal{I},E)$ is geometrically quasi-transverse and $(\widetilde{M},\widetilde{\theta},\widetilde{\mathcal{I}},\widetilde{E})$ satisfies the generalized Flow-Box property. Then $(M,\theta,\mathcal{I},E)$ also satisfies the generalized Flow-Box property.
\label{lem:20}
\end{lemma}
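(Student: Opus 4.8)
The plan is to transfer the generalized Flow-Box property from $(\widetilde M,\widetilde\theta,\widetilde{\mathcal I},\widetilde E)$ to $(M,\theta,\mathcal I,E)$ by exploiting that $\sigma$ is proper and that it is an isomorphism away from the center $\mathcal C$, which by geometric quasi-transversality (Remark \ref{rk:HSimplificado}) is contained in $Sing(\theta)\cup E$. Fix a point $p\in X=V(\mathcal I)$; since $X\setminus E$ is the only locus where the property must be checked, I may assume $p\notin E$, hence $p\notin\mathcal C$ and $\sigma$ is a local biholomorphism near the unique preimage $\widetilde p=\sigma^{-1}(p)$. First I would choose, using the hypothesis on $\widetilde M$, a neighbourhood $\widetilde U$ of $\widetilde p$, a radius $\widetilde\delta>0$ and a sub-Riemannian metric $\widetilde g$ on $\widetilde U$ generated by local generators $\{\widetilde\partial_1,\dots,\widetilde\partial_d\}$ of $\widetilde\theta$ realizing the property at every point of $\widetilde U\cap(\widetilde X\setminus\widetilde E)$. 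Shrinking $\widetilde U$ so that $\sigma|_{\widetilde U}$ is a biholomorphism onto an open set $U\ni p$ with $U\cap E=\emptyset$ (possible since $p\notin E$ and $E$ is closed), I push everything forward: set $\partial_i=\sigma_*\widetilde\partial_i$, which are local generators of $\theta\cdot\mathcal O_U$ because $\widetilde\theta=\sigma^*\theta$ and $\sigma$ is invertible on $\widetilde U$, and let $g=\sigma_*\widetilde g$ be the induced sub-Riemannian metric on $U$. By construction $\sigma|_{\widetilde U}$ is an isometry between $(\widetilde U,\widetilde g)$ and $(U,g)$, it maps $\widetilde X\cap\widetilde U$ onto $X\cap U$, leaves of $\widetilde\theta$ onto leaves of $\theta$ (preserving their dimension), and $g$-balls onto $g$-balls of the same radius.

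The core step is then purely formal: for $q\in U\cap(X\setminus E)$ with preimage $\widetilde q\in\widetilde U\cap(\widetilde X\setminus\widetilde E)$, and any $\eta<\widetilde\delta$, the isometry gives $\sigma\big(B^{\widetilde g}_\eta(\widetilde q)\big)=B^{g}_\eta(q)$ provided $B^{\widetilde g}_\eta(\widetilde q)\subset\widetilde U$; so I must first shrink $\widetilde\delta$ to a $\delta>0$ small enough that $B^{\widetilde g}_\eta(\widetilde q)\subset\widetilde U$ for all $\widetilde q\in\sigma^{-1}(U'\cap X)$, where $U'\Subset U$ is a slightly smaller neighbourhood of $p$ — this uses relative compactness and the continuity of the distance function. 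With that in hand, $B^g_\eta(q)$ meets $X$ only at $q$ (because $B^{\widetilde g}_\eta(\widetilde q)$ meets $\widetilde X$ only at $\widetilde q$ and $\sigma$ matches $X$ with $\widetilde X$ over $U$), and $B^g_\eta(q)\cong B^{\widetilde g}_\eta(\widetilde q)$ is homeomorphic to a $k_{\widetilde q}$-euclidean ball, with $k_{\widetilde q}=k_q$ since $\sigma$ preserves leaf dimensions. Thus $(U',\delta,g|_{U'})$ witnesses the generalized Flow-Box property of $(M,\theta,\mathcal I,E)$ at $p$, and since $p\in X\setminus E$ was arbitrary (and the points of $X\cap E$ impose no condition), we are done.

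The main obstacle I anticipate is the bookkeeping around the radius: one has to be careful that a single $\delta$ works uniformly for all $q$ in a neighbourhood of $p$, i.e. that the $g$-balls of radius $<\delta$ centered anywhere in $U'\cap X$ stay inside $U$ so the pushforward identification is legitimate; this is where relative compactness of $U'$ inside $U$ and lower semicontinuity (in fact continuity, by Lemma \ref{lem:G-FB1}-type equivalence of metrics) of $d_g$ enter. A minor point worth spelling out is that the property only needs verification on $X\setminus E$, so the potential bad locus $\mathcal C\subset Sing(\theta)\cup E$ never interferes — away from $E$ the blowing-up is an honest local isomorphism and there is genuinely nothing subtle beyond transporting the data through $\sigma$. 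Everything else — that $\sigma_*$ sends $\widetilde\theta$-generators to $\theta$-generators, leaves to leaves, and is an isometry for the pushed-forward sub-Riemannian structure — is immediate from the definitions in subsection \ref{ssec:SRG} and the relation $\widetilde\theta=\sigma^*\theta$ recorded in Remark \ref{rk:InvReg}(1).
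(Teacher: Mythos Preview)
Your argument has a genuine gap: it never handles the case $p\in\mathcal C$, and your attempt to exclude that case fails in two places.

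First, the implication ``$p\notin E$, hence $p\notin\mathcal C$'' is incorrect. Remark~\ref{rk:HSimplificado} (or, arguing directly from $\theta$-invariance of $\mathcal C$ together with geometric quasi-transversality) only yields $\mathcal C\cap(X\setminus E)\subset\{q:\dim L_q=0\}$, i.e.\ $\mathcal C\subset Sing(\theta)\cup E$, \emph{not} $\mathcal C\subset E$. So a point $p\in X\setminus E$ can perfectly well lie in $\mathcal C$ (at a singular point of $\theta$), and there $\sigma$ is not a local isomorphism: the fibre $\sigma^{-1}(p)$ is positive-dimensional and your pushforward construction collapses. Second, the reduction ``I may assume $p\notin E$'' and the closing claim that ``points of $X\cap E$ impose no condition'' are unjustified: the property at $p\in X$ demands a \emph{single} $\delta>0$ working uniformly for all $q$ in a full neighbourhood of $p$, and knowing the property pointwise at each $q\in X\setminus E$ (with its own $\delta_q$) does not give this.

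The paper's proof is built precisely around the case you skip. For $p\in\mathcal C$ one fixes generators of $\theta\cdot\mathcal O_p$ and the associated metric $g$ on a relatively compact $U\ni p$, pulls back to $\sigma^\ast g$ upstairs, uses properness of $\sigma$ to cover the compact fibre $\sigma^{-1}(p)$ by finitely many Flow-Box triples $(U_{q_i},\delta_{q_i},g_{q_i})$ (invoking Lemma~\ref{lem:G-FB1} to replace each $g_{q_i}$ by $\sigma^\ast g$), and sets $\delta=\min_i\delta_{q_i}$. For $q\in U\cap(X\setminus E)$ one then splits: if $q\in\mathcal C$ then quasi-transversality forces $\dim L_q=0$, so $B^g_\eta(q)=\{q\}$ and the condition is vacuous; if $q\notin\mathcal C$ then $\sigma$ is a local isometry at $q$ and the condition transfers from upstairs. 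Your local-isomorphism transport is essentially the paper's one-line ``the result is clear outside the center''; what is missing is exactly this compactness-of-the-fibre argument over $\mathcal C$.
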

\begin{proof}
The result is clear for any point outside of the center of blowing-up. So, let $p$ be a point in the center $\mathcal{C}$, $\{\partial_1, \ldots, \partial_d\}$ be a system of generators of $\theta \cdot\mathcal{O}_p$ and $g$ be the sub-Riemannian metric generated by $\{\partial_1, \ldots, \partial_d\}$. We can assume that $g$ is well-defined in a sufficiently small relatively compact open neighbourhood $U$ of $p$.

Since $(\widetilde{M},\widetilde{\theta},\widetilde{\mathcal{I}},\widetilde{E})$ satisfies the generalized Flow-Box property, at each point $q$ in $\sigma^{-1}(p)$ there exists a triple $(U_q,\delta_q,g_q)$ that satisfies the conditions of the generalized Flow-Box property. So, there exists a finite number $N$ of points $q_i$ on $\widetilde{M}$ such that $\sigma\left (\cup_{i \leq N} U_{q_i} \right)=  U$. We can assume, by lemma \ref{lem:G-FB1}, that $g_{q_i} = \sigma^{\ast}g$, where $\sigma^{\ast}g$ is generated by the pull-back vector-fields $\{\sigma^{\ast}\partial_1, \ldots, \sigma^{\ast}\partial_d\}$. Set $\delta = \min\{\delta_{q_i}; i=1, \ldots, N\}$. We claim that $(U,\delta,g)$ satisfies the conditions of the generalized Flow-Box property at $p$. Indeed, let $q \in (V(\mathcal{I}) \cap U) \setminus E$. If $q \in \mathcal{C}$ then, by the quasi-transversality hypothesis, {we have that $dim(L_q) = 0$}. Otherwise, $q \notin \mathcal{C}$ and $\sigma$ is local isomorphism {at $q$}. We conclude easily from these two observations. 
\end{proof}
\begin{proposition}\label{prop:LT2}
Following the hypothesis and notation of Proposition \ref{prop:LT}, if $(U,\theta_Z,\mathcal{I} |_U,E|_U)$ satisfies the generalized Flow-Box property, so does $(U,\theta,\mathcal{I}|_U ,E|_U)$.
\end{proposition}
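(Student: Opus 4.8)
The plan is to exploit the product structure of $\theta$ furnished by Proposition \ref{prop:LT}. Keeping its notation, we are given a coordinate system $(y,z)=(y,z_{1},\dots,z_{n-1})$ on $U$, centered at the point $p\in X=V(\mathcal{I})$, with $\partial_{Y}=\partial_{y}\in\theta\cdot\mathcal{O}_{U}$, $y\in\mathcal{I}\cdot\mathcal{O}_{U}$, and commuting generators $Z_{1},\dots,Z_{r}$ of $\theta_{Z}$ satisfying $Z_{i}(y)\equiv0$ and $[\partial_{y},Z_{i}]\equiv0$. First I would turn these two relations into a geometric normal form: $Z_{i}(y)\equiv0$ says each $Z_{i}$ has only $z$-components, and $[\partial_{y},Z_{i}]\equiv0$ says those components are independent of $y$; hence, after shrinking $U$ to a product $(-\epsilon,\epsilon)_{y}\times U_{0}$ with $U_{0}=U\cap\{y=0\}$, the distribution $\theta=\langle\partial_{y}\rangle\oplus\theta_{Z}$ is an ``orthogonal product''. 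Three consequences are then immediate and will be recorded: (i) $L_{q}=\mathbb{R}\,\partial_{y}|_{q}\oplus L_{q}^{Z}$ for every $q\in U$, the sum being direct because $dy$ annihilates every $Z_{i}$ while $dy(\partial_{y})=1$, so the leaf dimensions obey $k_{q}=k_{q}^{Z}+1$; (ii) since $y\in\mathcal{I}$, we have $X\subset\{y=0\}$, i.e.\ $X=\{0\}\times X_{0}$ with $X_{0}:=X\cap U_{0}$; (iii) any $\theta_{Z}$-admissible curve has $\dot y\equiv0$ and therefore stays in its slice $\{y=\mathrm{const}\}$.

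Next I would fix the sub-Riemannian metrics. By Lemma \ref{lem:G-FB1}, the generalized Flow-Box property of $(U,\theta_{Z},\mathcal{I}|_{U},E|_{U})$ holds for the metric $g_{Z}$ generated by $\{Z_{1},\dots,Z_{r}\}$, say with radius $\delta'>0$ (shrinking $U$ if needed); on $U$ I take the metric $g$ generated by $\{\partial_{y},Z_{1},\dots,Z_{r}\}$. Because $\partial_{y}$ contributes only to the $y$-direction and the $Z_{i}$ only to the $z$-directions, the map $\Phi_{q}$ of subsection \ref{ssec:SRG} has kernel $\{0\}\oplus\ker\Phi_{q}^{Z}$ (where $\Phi_{q}^{Z}$ is the analogous map for $\{Z_{1},\dots,Z_{r}\}$), so for an admissible velocity $v=c\,\partial_{y}+v_{Z}$ one gets $g_{q}(v,v)=c^{2}+g_{Z,q}(v_{Z},v_{Z})$. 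Integrating along curves, using Minkowski's inequality for $\mathbb{R}^{2}$-valued integrals for the lower bound and traversing the straight $y$-segment together with a near-geodesic of $g_{Z}$ at proportional constant speeds for the matching upper bound, I would derive the Pythagorean distance formula
\[
d_{g}\bigl((y_{1},z_{1}),(y_{2},z_{2})\bigr)=\sqrt{(y_{1}-y_{2})^{2}+d_{g_{Z}}\bigl((0,z_{1}),(0,z_{2})\bigr)^{2}},
\]
where by (iii) the restriction of $d_{g_{Z}}$ to $\{y=0\}$ is the intrinsic distance of $(U_{0},g_{Z}|_{U_{0}})$.

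With the formula in hand the two requirements of the generalized Flow-Box property for $(U,\theta,\mathcal{I}|_{U},E|_{U})$ at $p$ follow formally. Fix $q=(0,z_{q})\in U\cap(X\setminus E)$ and $\eta<\delta'$; the formula gives $B_{\eta}^{g}(q)=\{(y,z)\in U:\ y^{2}+d_{g_{Z}}(q,(0,z))^{2}<\eta^{2}\}$. If $(y,z)\in B_{\eta}^{g}(q)\cap X$ then $y=0$ by (ii) and $d_{g_{Z}}(q,(0,z))<\eta<\delta'$, so $(0,z)\in B_{\eta}^{g_{Z}}(q)\cap X=\{q\}$ by hypothesis; thus $B_{\eta}^{g}(q)\cap X=\{q\}$. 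For the homeomorphism type, set $\rho(a)=d_{g_{Z}}(q,a)$ on $U_{0}$ (continuous, with $\rho^{-1}([0,s))=B_{s}^{g_{Z}}(q)$ homeomorphic to a $k_{q}^{Z}$-euclidean ball for $s<\delta'$), and check that
\[
\Theta\colon B_{\eta}^{g}(q)\longrightarrow B_{\eta}^{g_{Z}}(q)\times(-1,1),\qquad \Theta(y,a)=\Bigl(a,\ \frac{y}{\sqrt{\eta^{2}-\rho(a)^{2}}}\Bigr),
\]
is a homeomorphism with inverse $(a,s)\mapsto\bigl(s\sqrt{\eta^{2}-\rho(a)^{2}},\,a\bigr)$. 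Since $B_{\eta}^{g_{Z}}(q)$ is homeomorphic to a $k_{q}^{Z}$-euclidean ball, $B_{\eta}^{g}(q)$ is homeomorphic to the product of such a ball with an interval, hence to a $(k_{q}^{Z}+1)=k_{q}$-euclidean ball. Therefore $(U,\delta',g)$ witnesses the generalized Flow-Box property of $(U,\theta,\mathcal{I}|_{U},E|_{U})$ at $p$, and since $p\in X$ was arbitrary the proof is complete.

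The part I expect to require genuine care is the passage from the algebraic splitting $\theta=\theta_{Y}+\theta_{Z}$ to the product normal form and the accompanying distance formula: converting the bracket relations into $y$-independence of the $Z_{i}$ in good coordinates, and then the ``shrink enough'' bookkeeping that keeps the relevant sub-Riemannian balls inside the product chart and makes the various intrinsic distances agree. Once those are settled, both clauses of the generalized Flow-Box property drop out of the explicit homeomorphism $\Theta$ with essentially no further computation.
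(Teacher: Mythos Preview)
Your argument is correct and rests on the same idea as the paper's: the relations $Z_i(y)\equiv 0$ and $[\partial_y,Z_i]\equiv 0$ make the sub-Riemannian metric $g$ generated by $\{\partial_y,Z_1,\dots,Z_r\}$ an orthogonal product of $dy^2$ and the $y$-independent metric $g_Z$, so lengths and balls split accordingly. The execution differs in two places. For the intersection clause the paper does not derive your full Pythagorean identity; it simply projects a hypothetical short $g$-curve $c(t)=(y(t),z(t))$ onto $\varphi(t)=(0,z(t))$ and uses the one-sided inequality $\|\dot c\|_{g,c(t)}\ge\|\dot\varphi\|_{g_Z,\varphi(t)}$ to contradict the Flow-Box hypothesis for $\theta_Z$. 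For the ball-homeomorphism clause the paper switches to the sup-norm $\|\cdot\|_{g,\infty}$, where the factorization $B^{g,\infty}_\eta(q)=B^{g_Y,\infty}_\eta(q)\times B^{g_Z,\infty}_\eta(q)$ is immediate, and then asserts that the conclusion for $\|\cdot\|_g$ ``also follows''. Your route---proving the exact distance formula first and then writing down the explicit homeomorphism $\Theta$---costs a little more up front but makes that second clause fully transparent; the paper's passage from the $\infty$-ball back to the $g$-ball is precisely the step left implicit there.
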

\begin{proof}
Assume that $(U,\theta_Z,\mathcal{I},E)$ satisfies the generalized Flow-Box property and let $X = V(\mathcal{I})$. Without loss of generality we can suppose that there exists a coordinate system $(y,\pmb{z}) = (y,z_{1},...,z_{n-1})$ centered at $p$ such that $\partial_Y = \partial_y$ and that $y \in \mathcal{I}$. Now, let $g$ be the sub-Riemannian metric generated by $\{\partial_Y,\partial_{Z_1},\ldots,\partial_{Z_s}\}$, $g_{Y}$ be the sub-Riemannian metric generated by $\{\partial_Y\}$ and $g_{Z}$ be the sub-Riemannian metric generated by $\{\partial_{Z_1},\ldots,\partial_{Z_s}\}$. By the usual Flow-Box Theorem, we know that $(U,\theta_Y,\mathcal{I},E)$ also satisfies the generalized Flow-Box property and, thus, there exists $\delta>0$ such that:
\[
d_{g_{Y}}\left(q, X \setminus \{q\}\right)> \delta ,\quad d_{g_Z}(q, X \setminus \{q\}) > \delta,
\]
for all point $q \in X \setminus E$. We claim that $d_{g}(q, X \setminus \{q\}) > \delta$. Indeed, suppose by contradiction that there exists a point $q$ in $X \setminus E$ such that $d_{g}(q, V(\mathcal{I}) \setminus \{q\}) < \delta$. In this case, there exists an absolutely continuous curve $c:[a,b] \longrightarrow U$ such that $c(a)=q$, $c(b) \in X \setminus \{q\}$ and $length_g(c(t))<\delta$. We write $c(t) = (y(t),z(t))$ and we define the absolutely continuous curve $\varphi(t) = (0,z(t))$. Since $y \in \mathcal{I} $, we conclude that $c(a) = (0,z(a))$ and $c(b)=(0,z(b))$ and, thus, $\varphi(a) =q$ and $\varphi(b) \in X \setminus \{q\}$. Now, since $g_Y = dy^2$
\[
\|  
\dot{c(t)} \|_{g, c(t)} \geq  \|\dot{z(t)} \|_{g, c(t)} = \| \dot{z(t)} \|_{g_Z, c(t)}.
\]
Moreover, since $(0,\dot{z(t)}) = \dot{\varphi(t)}$ and $g_Z$ is independent of the $y$ coordinate
\[
\| \dot{z(t)} \|_{g_Z, c(t)} = \| \dot{\varphi(t)} \|_{g_Z, \varphi(t)} \implies \|  \dot{c}(t) \|_{g, c(t)} \geq \| \dot{\varphi(t)} \|_{g_Z, \varphi(t)}.
\]
Which implies that
\[
\mbox{length}_{g_Z}(\varphi(t)) \leq \mbox{length}_{g}(c(t))< \delta,
\]
which is a contradiction. Thus, it only rests to prove that the $g$-ball $B^{g}_{\eta}(q)$, for $\eta <\delta$ is homeomorphic to a $k_q$-euclidean ball, where $k_q$ is the dimension of the leaf of $\theta$ passing through $q$. This is clear in the $\infty$-norm $\|.\|_{g,\infty}$ since in this norm $B^{g,\infty}_{\eta}(q) = B^{g_Y,\infty}_{\eta}(q) \times B^{g_Z,\infty}_{\eta}(q)$. So, the claim for the norm $\|\cdot\|_g$ also follows and we are done.
\end{proof}


\subsection{Analytic quasi-transversality}\label{ssec:AQT}
Following remark \ref{rk:GQT}, we consider the category $GQT$ whose objects are geometrically quasi-transverse foliated ideal sheaves $(M,\theta,\mathcal{I},E)$ and the morphisms are composition of $\theta$-invariant blowings-up of order one, restrictions and leaf-reduction operation (see Lemma \ref{lem:GQTcategory}). In this section, we define a sub-category of $GQT$, which we denote by $AQT$ (and whose objects are said to be analytically quasi-transverse), which satisfies the following condition: an object $(M,\theta,\mathcal{I},E)$ whose leaf dimension is one belongs to $AQT$ if, and only if, it is analytically quasi-transverse according to definition \ref{def:D1} (c.f. Remark \ref{rk:AQT}(2) below). More precisely:

\begin{definition} \label{def:AQT}
We denote by $AQT$ the maximal sub-category of $GQT$ whose objects $(M,\theta,\mathcal{I},E)$ satisfy the following condition:
\begin{equation}\label{eq:aqt}
\theta^2[\mathcal{I}] \subset \theta[\mathcal{I}] + \mathcal{I}
\end{equation}

%
%
%
Foliated ideal sheaves $(M,\theta,\mathcal{I},E)$ which belong to $AQT$ are said to be analytically quasi-transverse.
\end{definition}
\begin{remark}\label{rk:AQT}
\begin{itemize}
\item[(1)]  Note that, if $\mathscr{F}_1$ and $\mathscr{F}_2$ are two sub-categories of $GQT$ whose objects satisfy condition $\eqref{eq:aqt}$, then their union $\mathscr{F} = \mathscr{F}_1 \cup \mathscr{F}_2$ is also a sub-category whose objects satisfy condition $\eqref{eq:aqt}$. This remark, together with Zorn's Lemma, guarantee that $AQT$ is well-defined (that is, that there exists an \emph{unique} maximal sub-category which is closed by condition \eqref{eq:aqt}).

\item[(2)]

Note that $1$-foliated ideal sheaves $(M,\theta,\mathcal{I},E)$ satisfying the conditions of definition \ref{def:D1} belong to $AQT$. Indeed, denote by $aqt$ the subset of foliated ideal sheaves satisfying definition~\ref{def:D1}, and by $AQT(1)$ the set of objects in $AQT$ whose leaf dimension is $1$. It follows from the definition that $AQT(1) \subset aqt$. Next, it is not difficult to see that the union of $aqt$ with all objects whose leaf dimension is zero, forms a subcategory of $GQT$ (that is, closed under the admissible  morphisms, c.f Remark \ref{rk:InvReg}(2)) and it satisfies \eqref{eq:aqt}. Therefore $aqt \subset AQT(1)$ by maximality of $AQT$, which implies that these sets are equal.

\item[(3)] A geometrically quasi-transverse foliated ideal sheaf $(M,\theta, \mathcal{I},E)$ may satisfy condition \eqref{eq:aqt} and \emph{not} satisfy the generalized Flow-Box property. Indeed, we consider a modification of the example given in subsection \ref{ssec:Example}. Let $M = U \times \mathbb{K} \subset \mathbb{K}^5$ (with coordinate system $(x,y,z,w,v)$), where $U$ is a sufficiently small open neighborhood of the origin of $\mathbb{K}^4$ where $h(x^2+y^2)$ is well-defined (see Lemma \ref{lem:ce1}); we consider  $\theta$ generated by
\[
\partial_1 = y^2 \frac{\partial}{\partial w} +x \frac{\partial}{\partial y}- y \frac{\partial}{\partial x},\qquad \partial_2 = \frac{\partial}{\partial v} 
\]
and $\mathcal{I}= (z-f(x,y),w-g(x,y),v)$ (see definition of $f$ and $g$ on subsection \ref{ssec:Example}). It is easy to see that $(M,\theta,\mathcal{I},E=\emptyset)$ satisfies condition \eqref{eq:aqt}. Furthermore, following subsection \ref{ssec:Example}, we can verify that this example is geometrically quasi-transverse, but does not satisfy the generalized Flow-Box property. Note, nevertheless, that $(M,\theta,\mathcal{I},E=\emptyset)$ is \emph{not} algebraically quasi-transverse, since after a leaf-reduction operation, we recover the example of subsection \ref{ssec:Example}.
\end{itemize}
\end{remark}

\noindent
Although the definition can not be verified without considering blowings-up and leaf-reduction operations (c.f. Remark \ref{rk:AQT}(3)) there is at least one interesting class of examples which satisfies these conditions:
\begin{lemma}
Let $(M,\theta,\mathcal{I},E)$ be a geometrically quasi-transverse foliated ideal sheaf and suppose that $X = V(\mathcal{I})$ is a regular analytic curve transverse to $E$. Then $(M,\theta,\mathcal{I},E)$ is analytically quasi-transverse.
\label{lem:daqt}
\end{lemma}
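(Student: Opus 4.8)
The plan is to work locally at a point $p \in X$, splitting into the two cases $p \notin E$ and $p \in E$, and in each case to verify the inclusion $\theta^2[\mathcal{I}] \subset \theta[\mathcal{I}] + \mathcal{I}$ together with stability under the morphisms of $GQT$ (blowings-up, restrictions, leaf-reductions). Since $X$ is a regular curve transverse to $E$, near any $p \in X$ we may choose a coordinate system $(t, \pmb{x}) = (t, x_1, \ldots, x_{n-1})$ centered at $p$ such that $X = V(x_1, \ldots, x_{n-1})$ and, if $p \in E$, such that $E$ is a union of coordinate hyperplanes among $\{x_1 = 0\}, \ldots, \{x_{n-1} = 0\}$ (transversality of $X$ to $E$ means $\{t=0\}$ is not a component of $E$, so $t$ is a free coordinate along the curve). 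Thus $\mathcal{I} = (x_1, \ldots, x_{n-1})$ locally.

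First I would establish condition \eqref{eq:aqt} at a point $p \notin E$. By geometric quasi-transversality, the leaf space $L_p$ meets $T_p X = \mathbb{K}\,\partial_t$ trivially, and since $\theta$ is tangent to $E$ but $p \notin E$ there is no constraint from $E$; hence some local generator $\partial$ of $\theta$ satisfies $\partial(x_j)(p) \neq 0$ for some $j$, i.e. $\theta[\mathcal{I}] \cdot \mathcal{O}_p = \mathcal{O}_p$. But then $\theta^2[\mathcal{I}] \subset \mathcal{O}_p = \theta[\mathcal{I}]\cdot\mathcal{O}_p \subset \theta[\mathcal{I}] + \mathcal{I}$ trivially. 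The substantive point is the behaviour at $p \in E$: here $\theta$ is tangent to $E$, so each generator $\partial$ kills the ideal of every component of $E$ through $p$; writing $\partial = a(t,\pmb{x})\partial_t + \sum_j b_j(t,\pmb{x})\partial_{x_j}$, tangency to $\{x_j = 0\}$ forces $x_j \mid b_j$ for each $j$ with $\{x_j=0\}\subset E$. For such $j$ one computes $\partial(x_j) \in \mathcal{I}$, while for the (at most one, by transversality, but possibly more) indices $j$ with $\{x_j = 0\} \not\subset E$ geometric quasi-transversality forces $\partial(x_j)$ to be a unit at the generic point of $X \cap E$, hence $\theta[\mathcal{I}] + \mathcal{I} = \mathcal{O}_p$ again after possibly passing along the curve — but at the special point $p$ itself $\theta[\mathcal{I}]$ need not be the full ring, so one must check the inclusion honestly. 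The key computation is: for $\partial$ as above with $b_j = x_j c_j$ whenever $\{x_j=0\}\subset E$, and any $g \in \mathcal{I}$, writing $g = \sum_j x_j g_j$, one has $\partial^2(g) \in \mathcal{I} + \theta[\mathcal{I}]$ because each second derivative either reproduces a factor from $\mathcal{I}$ or produces a first-order term already in $\theta[\mathcal{I}]$; the curve hypothesis (codimension of $X$ maximal save one direction) is what prevents cross-terms from escaping $\mathcal{I} + \theta[\mathcal{I}]$.

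Next I would check that the property defining $AQT$ as the maximal sub-category is met, i.e. that starting from $(M,\theta,\mathcal{I},E)$ one stays inside a sub-category of $GQT$ closed under \eqref{eq:aqt}. Restrictions preserve everything trivially. For a $\theta$-invariant blowing-up $\sigma$ of order one with center $\mathcal{C}$: since $X$ is a regular curve transverse to $E$, any $\theta$-invariant center contained in $Sing(\theta)\cup E$ (Remark \ref{rk:HSimplificado}) meets $X$ in a point or not at all, and the weak transform $\widetilde{X}$ is again a regular curve transverse to $\widetilde{E}$ — so the class of such foliated ideal sheaves is itself closed under blowing-up, and by Remark \ref{rk:InvReg}(2) the transform of $\sum_i \theta^i[\mathcal{I}]$ is controlled, giving \eqref{eq:aqt} on $\widetilde{M}$. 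For the leaf-reduction operation, after it we obtain $(U, \theta_Z, \mathcal{I}|_U, E|_U)$ with $\widetilde{X}$ still a regular curve (same $X$) transverse to $E|_U$, so we are back in the same class. Hence the full subcategory generated by curve-type geometrically quasi-transverse foliated ideal sheaves (together with the zero-leaf-dimension objects, as in Remark \ref{rk:AQT}(2)) is closed under the admissible morphisms and satisfies \eqref{eq:aqt}; by maximality of $AQT$ it is contained in $AQT$, which gives the claim.

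I expect the main obstacle to be the bookkeeping at a point $p \in X \cap E$: one must simultaneously exploit tangency of $\theta$ to $E$ (which puts each $b_j$ with $\{x_j=0\}\subset E$ into the ideal $(x_j)$), geometric quasi-transversality along $X$ (which controls the one transverse direction), and the fact that $X$ is one-dimensional (which is precisely what keeps the iterated derivatives $\theta^2[\mathcal{I}]$ from generating new directions outside $\theta[\mathcal{I}] + \mathcal{I}$) — and then to see that all of this survives the three morphism types, especially the interaction of a blowing-up centered inside $E$ with the transversality of $X$. The curve hypothesis is essential and, as Remark \ref{rk:AQT}(3) shows, fails in higher dimension, so the proof must use it in an unavoidable way rather than incidentally.
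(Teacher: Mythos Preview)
Your coordinate setup at $p\in X\cap E$ is backwards. If $X=V(x_1,\ldots,x_{n-1})$ and $E$ were a union of hyperplanes $\{x_j=0\}$, then $X\subset E$, contradicting transversality. Since $X$ is a curve meeting the hypersurface $E$ transversally, the correct local picture is $E=\{t=0\}$: the component of $E$ through $p$ \emph{cuts} the curve, it does not contain it. Tangency of $\theta$ to $E$ then says $\theta[(t)]\subset(t)$, and this is the fact that actually drives the proof; your condition ``$x_j\mid b_j$'' is based on the wrong hypersurface and leads nowhere. There is also a gap at $p\notin E$: from $L_p\cap T_pX=0$ you cannot conclude that some $\partial(x_j)(p)\neq0$, because $L_p$ may be zero (a singular point of $\theta$ on $X$ is permitted by geometric quasi-transversality). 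Finally, your ``key computation'' is only a gesture; the inclusion \eqref{eq:aqt} is never actually verified.

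The paper's argument is short and uses the curve hypothesis in one clean stroke. Since $\mathcal O_{X,p}\cong\IK\{t\}$ is a discrete valuation ring, any ideal containing $\mathcal I$ has the form $(x_1,\ldots,x_{n-1},t^r)$; in particular $\mathcal J:=\mathcal I+\theta[\mathcal I]=(x_1,\ldots,x_{n-1},t^r)$ for some $r\geq0$. If $r=0$ we are done. If $r\geq1$ and $p\in E=\{t=0\}$, tangency of $\theta$ to $E$ gives $\theta[(t^r)]\subset(t^r)$, hence $\theta[\mathcal J]\subset\mathcal J$, which is exactly \eqref{eq:aqt}. (When $p\notin E$ and $r\geq1$ one checks $L_p=0$, so every local section $\partial$ has $\partial(t)\in\fm_p$, and then $\partial(t^r)\in t^{r-1}\fm_p\subset\mathcal J$ by the same computation.) This is precisely where one-dimensionality of $X$ enters: it forces $\mathcal J$ to be of that simple shape. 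Your discussion of closure under the $GQT$ morphisms is essentially correct and matches the paper.
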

\begin{proof}
Denote by $\mathscr{F}$ denote the subset of geometrically quasi-transverse foliated ideal sheaves such that $X=V(\mathcal{I})$ is a regular curve transverse to $E$. We note that $\mathscr{F}$ is a sub-category of $GQT$ since it is closed by $\theta$-invariant blowings-up of order one and leaf-reduction operation. By the maximality of $AQT$ it is enough, therefore, to prove that \eqref{eq:aqt} is satisfied. 


Fix a foliated ideal sheaf $(M,\mathcal{I},\theta,E)$ in $\mathscr{F}$ and a point $p$ in $X=V(\mathcal{I})$. Without loss of generality, there exists a coordinate system $(\pmb{x},y) = (x_1, \ldots, x_{m-1},y)$ centered at $p$ such that $\mathcal{I} = (x_1, \ldots, x_{m-1})$ (furthermore, if $p\in E$, we can assume that $E=(y=0)$). By the geometric quasi-transversality hypothesis and the fact that $\theta$ is tangent to $E$, we conclude that $\mathcal{J}:=(\theta[\mathcal{I}]+\mathcal{I})\cdot \mathcal{O}_p$ is either equal to $\mathcal{O}_p$ (and we are done), or $E=(y=0)$ and $\mathcal{J}=(x_1, \ldots, x_{m-1}, y^r)$ for some $r \in \mathbb{N}$. Now, from the fact that $\theta$ is tangent to $E$, we conclude that $\theta[y^r] \subset (y^r)$, which implies that $\theta[\mathcal{J}]\subset \mathcal{J}$. Thus $\theta^2[\mathcal{I}] \subset \theta[\mathcal{I}] + \mathcal{I}$, which proves that all elements in $\mathscr{F}$ satisfy \eqref{eq:aqt}.


\end{proof}
\subsection{Main result}\label{ssec:mainTh}
\begin{theorem}
Let $(M,\theta,\mathcal{I},E)$ be an analytically quasi-transverse foliated ideal sheaf, then $(M,\theta,\mathcal{I},E)$ satisfies the generalized Flow-Box property.
\label{th:Rd}
\end{theorem}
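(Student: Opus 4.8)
The plan is to argue by induction on the leaf dimension $d$ of $\theta$, feeding the reduction lemmas of this section into one another. The base case $d=0$ is immediate: then $L_q=0$ for every $q$, so for any choice of local generators the only absolutely continuous curve of finite sub-Riemannian length through a point $q$ is the constant one; hence $B^g_\eta(q)=\{q\}$, a $0$-dimensional Euclidean ball meeting $X=V(\mathcal{I})$ only at $q$, and the generalized Flow-Box property holds trivially (with $U$ arbitrary and $\delta=1$, say).

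For the inductive step I would fix a point $p_0\in X$ (the property being local along $X$) and work over a relatively compact neighbourhood of it. Since $(M,\theta,\mathcal{I},E)\in AQT$, condition \eqref{eq:aqt} is exactly the inclusion $\theta^2(\mathcal{I})\subset\mathcal{I}+\theta(\mathcal{I})$ needed to invoke Proposition \ref{prop:HSimplificado} with $\nu=1$; this produces a finite sequence $\sigma=\sigma_1\circ\cdots\circ\sigma_r$ of $\theta$-invariant blowings-up of order one after which $\widetilde{\mathcal{I}}+\widetilde{\theta}[\widetilde{\mathcal{I}}]=\mathcal{O}_{\widetilde{M}}$. Each intermediate object is obtained from $(M,\theta,\mathcal{I},E)$ by an open restriction followed by $\theta$-invariant blowings-up of order one, hence stays in $AQT$ (in particular in $GQT$), so $(\widetilde{M},\widetilde{\theta},\widetilde{\mathcal{I}},\widetilde{E})$ is again analytically quasi-transverse, of leaf dimension $d$. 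Applying Lemma \ref{lem:20} to $\sigma_r,\ldots,\sigma_1$ in turn (each step legitimate since all intermediate objects are geometrically quasi-transverse), it suffices to establish the generalized Flow-Box property for $(\widetilde{M},\widetilde{\theta},\widetilde{\mathcal{I}},\widetilde{E})$ at every point of $V(\widetilde{\mathcal{I}})$.

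Next I would fix $q_0\in V(\widetilde{\mathcal{I}})$. Because $q_0\in V(\widetilde{\mathcal{I}})$, the equality $\widetilde{\mathcal{I}}+\widetilde{\theta}[\widetilde{\mathcal{I}}]=\mathcal{O}_{\widetilde{M}}$ forces $\widetilde{\theta}[\widetilde{\mathcal{I}}]\cdot\mathcal{O}_{q_0}=\mathcal{O}_{q_0}$, and hence $\widetilde{\theta}[\widetilde{\mathcal{I}}]|_U=\mathcal{O}_U$ on some open neighbourhood $U$ of $q_0$ (the zero locus of an ideal sheaf being closed). Restricting to $U$ (still inside $AQT$), Proposition \ref{prop:LT} provides, after shrinking $U$ if necessary, a decomposition $\widetilde{\theta}|_U=\theta_Y+\theta_Z$ with $\partial_Y(\widetilde{\mathcal{I}}\cdot\mathcal{O}_{q_0})=\mathcal{O}_{q_0}$, $[\partial_Y,\partial_{Z_i}]\equiv 0$ and $\theta_Z$ of leaf dimension $d-1$. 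The leaf-reduction operation then yields $(U,\theta_Z,\widetilde{\mathcal{I}}|_U,\widetilde{E}|_U)\in AQT$ of leaf dimension $d-1$, which satisfies the generalized Flow-Box property by the inductive hypothesis; Proposition \ref{prop:LT2} upgrades this to the generalized Flow-Box property for $(U,\widetilde{\theta}|_U,\widetilde{\mathcal{I}}|_U,\widetilde{E}|_U)$, in particular at $q_0$. Since $q_0\in V(\widetilde{\mathcal{I}})$ and $p_0\in X$ were arbitrary, descending back through Lemma \ref{lem:20} completes the induction.

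The hard part will be the categorical bookkeeping rather than any estimate: at each step one must know that the operation performed --- $\theta$-invariant blowing-up of order one, open restriction, and above all the leaf-reduction --- keeps the object inside $AQT$, for it is only through this that the inductive hypothesis can be invoked after leaf-reduction. This is precisely what the maximality clause of Definition \ref{def:AQT}, together with Remark \ref{rk:AQT}(1)--(2), is designed to provide; once it is granted, the theorem becomes a clean assembly of Propositions \ref{prop:HSimplificado}, \ref{prop:LT}, \ref{prop:LT2} and Lemma \ref{lem:20}, with no further computation.
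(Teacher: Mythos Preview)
Your proposal is correct and follows essentially the same route as the paper's proof: induction on the generic leaf dimension, with Proposition~\ref{prop:HSimplificado} (applied with $\nu=1$) to reach $\widetilde{\mathcal{I}}+\widetilde{\theta}[\widetilde{\mathcal{I}}]=\mathcal{O}_{\widetilde{M}}$, Proposition~\ref{prop:LT} and leaf-reduction to drop the dimension, Proposition~\ref{prop:LT2} to recover $\widetilde{\theta}$, and Lemma~\ref{lem:20} to descend through the blowings-up, with closure of $AQT$ under all three operations used throughout. Your treatment is in fact slightly more careful than the paper's in one place: you explicitly pass to a neighbourhood $U$ of $q_0$ on which $\widetilde{\theta}[\widetilde{\mathcal{I}}]=\mathcal{O}_U$ before invoking Proposition~\ref{prop:LT}, which is exactly what that proposition requires.
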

\begin{proof}[Proof of Theorem \ref{th:Rdsimpl}]
If $(M,\theta,\mathcal{I},E)$ is a geometrically quasi-transverse foliated ideal sheaf such that $X = V(\mathcal{I})$ is a regular analytic curve transverse to $E$, by Lemma \ref{lem:daqt} is is analytically quasi-transverse and by Theorem \ref{th:Rd} it satisfies the generalized Flow-Box property.
\end{proof}
%
%
%
%
%
\begin{proof}[Proof of Theorem $\ref{th:Rd}$:] We prove by induction on the dimension of a \emph{generic leaf} $d$; the case that $d=0$ is trivial. So, assume by induction that the Theorem holds whenever $\theta$ has generic leaf dimension smaller or equal to $d-1$ and fix a singular distribution $\theta$ with generic leaf dimension $d$. Fix a point $p$ in $X=V(\mathcal{I})$ and a relatively compact open neighbourhood $M_0$ of $p$. By Proposition \ref{prop:HSimplificado} there exists a sequence of $\theta$-invariant blowings-up of order one:
\[
\begin{tikzpicture}
  \matrix (m) [matrix of math nodes,row sep=3em,column sep=4em,minimum width=2em]
  {(\widetilde{M},\widetilde{\theta},\widetilde{\mathcal{I}},\widetilde{E}) = (M_r,\theta_r,\mathcal{I}_r,E_r) & \cdots & (M_0,\theta_0,\mathcal{I}_0,E_0)\\};
  \path[-stealth]
    (m-1-1) edge node [above] {$\sigma_r$} (m-1-2)
    (m-1-2) edge node [above] {$\sigma_1$} (m-1-3);
\end{tikzpicture}
\]
such that (since $AQT$ is closed by $\theta$-admissible blowings-up) $\widetilde{\theta}[\widetilde{\mathcal{I}}] + \widetilde{\mathcal{I}} =\mathcal{O}_{\widetilde{M}}$. Since $AQT$ is a sub-category, $(\widetilde{M},\widetilde{\theta},\widetilde{\mathcal{I}},\widetilde{E})$ is analytically quasi-transverse. By Proposition \ref{prop:LT}, for each point $q$ in the pre-image of $p$, there exists a decomposition $\widetilde{\theta} = \widetilde{\theta}_Y + \widetilde{\theta}_Z$ valid in a sufficiently small open neighbourhood $\widetilde{U}$ of $q$ and (since $AQT$ is closed by leaf-reduction operation) $(\widetilde{U}, \widetilde{\theta}_Z,\widetilde{\mathcal{I}}|_{\widetilde{U}},\widetilde{E}|_{\widetilde{U}})$ is analytically quasi-transverse. By the induction hypothesis, $(\widetilde{U}, \widetilde{\theta}_Z,\widetilde{\mathcal{I}}|_{\widetilde{U}},\widetilde{E}|_{\widetilde{U}})$ satisfies the generalized Flow-Box property. By Proposition \ref{prop:LT2}, $(\widetilde{U}, \widetilde{\theta},\widetilde{\mathcal{I}}|_{\widetilde{U}},\widetilde{E}|_{\widetilde{U}})$ also satisfies the generalized Flow-Box property. Since $q$ was arbitrary, $(\widetilde{M}, \widetilde{\theta},\widetilde{\mathcal{I}},\widetilde{E})$ satisfies the generalized Flow-Box property and, finally, by Lemma \ref{lem:20}, the foliated ideal sheaf $(M_0,\theta_0,\mathcal{I}_0,E_0)$ satisfies the generalized Flow-Box property. Since the choice of $p \in X$ was arbitrary, we have proved the result.
\end{proof}
\addcontentsline{toc}{chapter}{Bibliography}
\bibliographystyle{alpha}

\end{document}